\newtheorem{Theorem}{\sc Theorem}
\newtheorem{Definition}[Theorem]{\sc Definition}
\newtheorem{Lemma}[Theorem]{\sc Lemma}
\newtheorem{Remark}[Theorem]{\sc Remark}
\newtheorem{Example}[Theorem]{\sc Example}
\newlength{\sperr}
\def\BBox{\hbox{\vrule height 6pt depth 0pt width 6pt}}
\newenvironment{proof}{{\settowidth{\sperr}{\rm Proof}
		\hspace{-0.6cm}\parbox[t]{1.3\sperr}{\bf Proof.}
	}~}{\nopagebreak\mbox{}\hfill$\BBox$\par\addvspace{4mm}}
\newcommand{\bx}{\mbox{\boldmath{$x$}}}
\newcommand{\bzero}{\mbox{\boldmath{$0$}}}
\newcommand{\fb}{\mbox{\boldmath{$f$}}}
\newcommand{\bvarepsilon}{\mbox{\boldmath{$\varepsilon$}}}
\newcommand{\bu}{\mbox{\boldmath{$u$}}}
\newcommand{\bh}{\mbox{\boldmath{$h$}}}
\newcommand{\bs}{\mbox{\boldmath{$s$}}}
\newcommand{\bmu}{\mbox{\boldmath{$\mu$}}}
\newcommand{\bg}{\mbox{\boldmath{$g$}}}
\newcommand{\bp}{\mbox{\boldmath{$p$}}}
\newcommand{\bv}{\mbox{\boldmath{$v$}}}
\newcommand{\bphi}{\mbox{\boldmath{$\phi$}}}
\newcommand{\bw}{\mbox{\boldmath{$w$}}}
\newcommand{\bz}{\mbox{\boldmath{$z$}}}
\newcommand{\beeta}{\mbox{\boldmath{$\eta$}}}
\newcommand{\bsigma}{\mbox{\boldmath{$\sigma$}}}
\newcommand{\bomega}{\mbox{\boldmath{$\omega$}}}
\newcommand{\bzeta}{\mbox{\boldmath{$\zeta$}}}
\newcommand{\btau}{\mbox{\boldmath{$\tau$}}}
\newcommand{\bnu}{\mbox{\boldmath{$\nu$}}}
\newcommand{\dbu}{\mbox{\boldmath{$\delta u$}}}
\newcommand{\dom}{\mbox{\boldmath{$\delta \omega$}}}
\newcommand{\dbh}{\mbox{\boldmath{$\delta h$}}}
\newcommand{\dz}{\mbox{\boldmath{$\delta z$}}}
\newcommand{\eps}{\bvarepsilon}
\newcommand{\df}{\mbox{\boldmath{$\delta f$}}}
\newcommand{\RR}{\mathcal{R}}
\newcommand{\KK}{\mathcal{K}}
\newcommand{\GG}{\mathcal{G}}
\newcommand{\CC}{\mathcal{C}}
\newcommand{\OO}{\mathcal{O}}
\newcommand{\WW}{\mathcal{W}}
\newcommand{\BB}{\mathcal{B}}
\newcommand{\FF}{\mathcal{F}}
\newcommand{\R}{\mathbb{R}}
\newcommand{\MM}{\mathcal{M}}
\newcommand{\dual}[2]{\langle #1 , #2 \rangle}
\newcommand{\ve}{\mbox{{$\varepsilon$}}}
\newcommand{\cS}{\mbox{{${\cal S}$}}}
\newcommand{\cO}{\mbox{{${\cal O}$}}}
\newcommand{\cR}{\mbox{{${\cal R}$}}}
\newcommand{\cQ}{\mbox{{${\cal Q}$}}}
\newcommand{\cP}{\mbox{{${\cal P}$}}}
\newcommand{\cL}{\mbox{{${\cal L}$}}}
\def\sqr#1#2{{
		\vcenter{
			\vbox{\hrule height.#2pt
				\hbox{\vrule width.#2pt height#1pt \kern#1pt
					\vrule width.#2pt
				}
				\hrule height.#2pt
			}
		}
}}
\def\bar{\overline}
\def\real{\mathbb{R}}
\def\lista#1
\newcounter{theorem}
\title{\bf  Sensitivity analysis of a Signorini-type history-dependent  variational inequality}
\author{Livia Betz$^1$\footnote{Corresponding author}\ , Andaluzia Matei$^2$\ and\ Mircea Sofonea$^3$\\[7mm]
{\it \small  $1$ Institute of Mathematics}\\
{\it \small  University of W\"urzburg }\\
{\it \small Emil-Fischer Str.\,30, 97074 W\"urzburg, Germany} \\ [7mm]		
{\it \small  $2$ Department of Mathematics}\\
{\it \small University of Craiova}\\
{\it \small Str. A.I. Cuza 13, Craiova, Romania}
\\	[5mm]
{\it \small  $3$ Laboratoire de Math\'ematiques et Physique}\\
{\it \small
	University of Perpignan Via Domitia}
\\{\it\small 52 Avenue Paul Alduy, 66860 Perpignan, France}		}
\date{}
\begin{document}
\maketitle

\vskip 4mm

\noindent {\small{\bf Abstract.}
We consider a history-dependent variational inequality $\cP$  which models the frictionless contact between a viscoelastic body and a rigid obstacle covered by a layer of soft material. The inequality is expressed in terms of the displacement field, is governed by the data $\fb$ (related to the applied body forces and surface tractions) and, under appropriate assumptions, it has a unique solution, denoted by $\bu$.   Our aim in this paper is to perform a sensitivity analysis of the inequality $\cP$, including the study of the regularity of the
solution operator $\fb\mapsto\bu$.
To this end, we start by proving the equivalence of $\cP$ with a fixed point problem, denoted by $\cQ$ (Theorem \ref{t1n}). We then consider an associated optimal control problem, for which we present an existence result (Theorem \ref{t2}). Then, we prove the directional differentiability of the solution operator and show that the directional derivative at $\fb$ in direction $\df$ is characterized by a history-dependent variational inequality with time-dependent constraints (Theorem \ref{t3}).  Finally, we prove two well-posedness results in the study of  Problems $\cP$ and $\cQ$, respectively  (Theorem \ref{t4}), and compare the two well-posedness concepts employed.

}

\vskip 4mm
\noindent
{\bf Keywords:}  Signorini problem, history-dependent variational inequality, fixed point,
optimal control,  directional differentiability, well-posedness result.

\vskip 4mm

\noindent {\bf 2020 Mathematics Subject Classification:} \ 49J40, 47J20,  74M15, 47G10,  49J20, 49K50.

\vspace{8mm}

\section{Introduction}\label{s1}
\setcounter{equation}0

Contact  between deformable bodies abounds in industry and everyday life. A few simple examples are brake pads in contact
with wheels, tires on roads, and pistons with skirts. Because
of the importance of contact processes in structural and mechanical
systems, considerable effort has been put into their modelling, analysis
and numerical simulations, and the literature in the field is extensive. It includes the
books~\cite{C,EJK, P, SM2, SofMig}, for instance.

In a weak formulation, most of the mathematical models of contact lead to variational inequalities. Their analysis is carried out by using arguments of monotonicity, compactness, fixed point, including the properties of the subdifferential of a convex function. It concerns results on existence, uniqueness and numerical approximation, including error estimates and convergence of discrete and semidiscrete schemes. Comprehensive references
in the field are the books \cite{G, HHNL, KO, P}, for instance.  In the last two decades, a growing attention was given to a special class of variational inequalities, namely the history-dependent variational inequalities. These are inequalities governed by a so-called history-dependent operator. An example is given by the Volterra integral operator and the memory operator which arises in the constitutive law of viscoelastic  materials.
Even if existence, uniqueness and convergence  results have been obtained in the literature (see \cite{SM2,SofMig}, for instance), there are quite few results on the optimal control, regularity of  solutions and well-posedness of  history-dependent variational inequalities. Concerning optimality conditions for the control of history-dependent variational inequalities, the literature is rather scarce and the only reference known are \cite{jnsao,aos,bp}.
The lack of contributions on the topic arises from the strong nonlinearity and nonsmoothness
of these problems, which leads to various mathematical difficulties.  However, the need to perform an in-depth sensitivity analysis of such inequalities and, in particular, of those inequalities which model contact phenomena, is widely recognized in the literature.

In the current paper we intend to fill this gap.
Thus, we consider a history-dependent variational inequality $\cP$ which describes the contact of a viscoelastic body with a foundation. The inequality  has as unknown the displacement field, is governed by a set of unilateral constraints, a history-dependent operator and a function $\fb$, which describes the applied body forces and surface tractions.  It has a special structure, since we show that it is equivalent with a fixed point problem, denoted in what follows by $\cQ$.
Under appropriate conditions on the data, the inequality has a unique solution $\bu$.  Our aim in this paper is three fold, as described  below.

The first one is to state an optimal control problem associated to the variational inequality $\cP$, denoted by $\cO$,  and to show the existence of optimal controls.
To this end we use arguments of compactness and lower semicontinuity.  Even if the  arguments are standard, the  novelty of the existence result we  present here arises from the special structure of the inequality $\cP$, whose controllability seams to be investigated here for the first time. The optimal control of a Signorini-type variational inequality with normal compliance was addressed in \cite{bst}. Nevertheless, the problem considered there was elliptic and the physical setting was one-dimensional. By contrast, in the current paper we consider a $d$-dimensional problem $(d\in\{1,2,3\}$) which, in addition, is governed by a history-dependent operator.
Further results on optimal control for variational inequalities  in Contact Mechanics can be found in   \cite{ACF02,CMMN,HZ19,MM11,MM17,MMN,ZMK21} and the references therein.

Our second aim is to study the regularity of the solution operator $\fb\mapsto \bu$ in terms of  directional differentiability. The fact that variational inequalities have limited differentiability properties is  a well-known fact. Starting with the pioneering contribution \cite{mp76}, there have been many advances on the topic in the last decades. From the numerous works that address the sensitivity analysis of variational inequalities, we only refer to\,\cite{by18, cc,Do,haraux,conical} and the references therein. However,  much less is known about the differentiability properties of variational inequalities  in the presence of a  history-dependent term. To the best  of our knowledge, the only papers dealing with this issue are \cite{jnsao} and \cite{aos}, where the sensitivity analysis of  history-dependent viscous damage evolutions is discussed. We point out that the variational inequality in the present paper  brings out some additional challenges. Besides having a long-term memory term, it displays a doubly non-smooth character: one is due to the inequality, while the second one is owed to the presence of a  so-called normal compliance term, see \eqref{pp} below.
To deal with these difficulties we rewrite our variational inequality appearing in problem $\cP$ as an identity, that is, as the fixed point problem $\cQ.$ The idea that variational inequalities can be equivalently written in this manner has already been successfully employed in various contributions dealing with optimal control of viscous damage models \cite{st_coup,jnsao, aos,  susu_dam}. In the present work, the arising fixed point mapping involves the solution operator of an elliptic variational inequality governed by a strongly monotone Lipschitz continuous operator. For this type of problems, directional differentiability has been investigated in \cite{ar, w0}
and we shall make use of these findings to  prove that our solution operator $\fb\mapsto \bu$ is Hadamard directionally differentiable. Its directional derivative at any point $\fb$ in direction $\df$ satisfies a history-dependent variational inequality with time-dependent constraints. This result  represents a second trait of novelty of our current work.

Finally, our third aim in this paper is  to study the well-posedness  of the Problems $\cP$ and $\cQ$.
 Here, the novelty arises from the fact that  we introduce two different well-posedness concepts, compare them, and prove the corresponding well-posedness results.
Well-posedness concepts in the study of nonlinear problems, including variational and hemivariational  inequalities, optimization  and fixed point problems have been intensively studied in the last decades. Basic references in the field are \cite{DZ,L} and, more recently, \cite{S}.

The rest of the manuscript is structured as follows. In Section \ref{s2} we introduce some notation, state the inequality problem $\cP$, list the assumptions on the data and recall an existence and uniqueness result in the study of Problem $\cP$ (Theorem \ref{t1}). Moreover, we provide some mechanical interpretations. In Section \ref{s2n} we introduce the fixed point problem $\cQ$ and prove that it is equivalent with the inequality problem $\cP$. Then, in Section \ref{s3} we focus on the associated optimal control problem  $\cO$ and prove an existence result (Theorem \ref{t2}). The sensitivity analysis of the solution operator for Problems $\cP$ and $\cQ$ is provided in Section \ref{s4}. There, we state and prove our main result in this paper (Theorem \ref{t3}). The proof of this theorem is carried out in several steps, based on some intermediate results.  We end our paper with Section \ref{s5}, dedicated to the well-posedness of the Problems $\cP$ and $\cQ$. There, we prove two well-posedness result (Theorem \ref{t4}), and discuss about the well-posedness concepts we consider.

\section{Problem statement}\label{s2}
\setcounter{equation}0

This preliminary section is structured into three parts, as follows.

\medskip\noindent
{\bf Notation.}  We use the symbols ``$\to$" and ``$\rightharpoonup$" to indicate the strong and weak convergence in various normed spaces that will be specified, except in the case when these convergences take place in $\R$. All the limits and lower limits are considered as $n\to\infty$, even if we do not mention it explicitly. For a sequence $\{\ve_n\}\subset\R_+$ which converges to zero we use the short hand notation $0\le \ve_n\to 0$.

Let $\mathbb{S}^d$  be  the space of second order symmetric
tensors on $\mathbb{R}^d$ with $d\in\{1,2,3\}$.
We  denote by $``\cdot"$ and ``$\| \cdot\|$" the inner product and the Euclidean norm on the spaces  $\mathbb{R}^d$  and  $\mathbb{S}^d$, respectively.
We also consider a bounded domain $\Omega\subset \real^d$ with smooth boundary, denoted by  $\Gamma$.  Moreover, we assume that $\Gamma_1$ is a measurable part of $\Gamma$
whose  $d-1$ measure, denoted by ${meas}\,(\Gamma_1)$, is positive. The outward unit normal at $\Gamma$ will be denoted by $\bnu$ and $[0,T]$ will represent the time interval of interest, with $T>0$ fixed.   In addition, $\bx$ stands for  a typical point in $\Omega\cup\Gamma$  and, for simplicity, we sometimes skip the dependence of various functions on the spatial variable $\bx$, that is, we write $\bu(t)$ instead of $\bu(x,t)$.

We use the standard notation for the Lebesgue and Sobolev spaces associated to $\Omega$, $\Gamma$ and $T$. Typical examples are the spaces
$L^2(\Omega)^d$, $L^2(\Gamma)^d$, $H^1(\Omega)^d$ and $H^1(0,T;L^2(\Omega)^d)$, equipped with their canonical Hilbertian structure.   For an element $\bv\in H^1(\Omega)^d$ we still write $\bv$ for the trace $\gamma\bv\in L^2(\Gamma)^d$ and $v_\nu\in L^2(\Gamma)$ for the normal trace on the boundary, i.e.,
$v_\nu=\bv\cdot\bnu$. Moreover,
$\bvarepsilon(\bv)$ will denote the symmetric
part of the gradient of $\bv$, i.e.,
\begin{equation}\label{de}
\bvarepsilon(\bv)=\frac{1}{2}\big(\nabla \bv+\nabla\bv^T\big).
\end{equation}

Next, we introduce the spaces $V$ and  $Q$, defined as follows:
\begin{eqnarray}
	&&\label{spV}V=\{\,\bv=(v_i)\in H^1(\Omega)\ :\  v_i =0\ \ {\rm on\ \ }\Gamma_1 \ \ \ \forall\, i=\overline{1,d}\,\},\\
	&&\label{spQ}Q=\{\,\bsigma=(\sigma_{ij}):\ \sigma_{ij}=\sigma_{ji} \in L^{2}(\Omega)\ \ \ \forall\, i,\,j=\overline{1,d}\,\}.
\end{eqnarray}
The spaces $V$ and $Q$  are real Hilbert spaces endowed with the inner products
\begin{equation}\label{eeq}
	(\bu,\bv)_V= \int_{\Omega}
	\bvarepsilon(\bu)\cdot\bvarepsilon(\bv)\,dx,\qquad ( \bsigma,\btau )_Q =
	\int_{\Omega}{\bsigma\cdot\btau\,dx}.
\end{equation}
The associated norms on these spaces will be denoted by $\|\cdot\|_{V}$ and $\|\cdot\|_{Q}$, respectively.  Recall that the completeness of the space $(V,\|\cdot\|_{V})$ follows from the
assumption ${meas}\,(\Gamma_1)>0$, which allows the use of Korn's
inequality.
Note also that, by the definition of the inner product in the spaces $V$ and $Q$, we have
\begin{equation}
	\|\bv\|_V=\|\bvarepsilon(\bv)\|_Q\qquad\forall\,\bv\in V
	\label{684}
\end{equation}
and, using the Sobolev
theorem, we deduce that
\begin{equation}\label{trace}
	\|\bv\|_{L^2(\Gamma)^d}\le c_{0}\,\|\bv\|_{V}\qquad
	\forall\,\bv \in V.
\end{equation}
Here $c_{0}$ is a positive constant which depends on $\Omega$ and $\Gamma_1$. We also denote by $V^*$ the dual of the space $V$ and
$\dual{ \cdot}{\cdot}$ will represent the duality pairing between $V$ and $V^*$.

In addition,
we need the space of fourth order tensors
${\bf Q_\infty}$ given by
\begin{equation}\label{Qi} {\bf Q_\infty}=\{\, {\cal D}=(D_{ijkl})\ : \
	{D}_{ijkl}={D}_{jikl}={D}_{klij} \in L^\infty(\Omega) \ \ \forall\, i,\,j,\,k,\,l=\overline{1,d}\,\}\,.
\end{equation} It is easy to see that ${\bf Q_{\infty}}$ is
a real Banach space with the norm
\begin{equation*}\label{**}
	\displaystyle \|{\cal{D}}\|_{\bf Q_{\infty}}=\max_{0\le i,j,k,l\le	d}\|{D}_ {ijkl}\|_{L^{\infty}(\Omega)}
\end{equation*} \noindent
and, moreover,
\begin{equation}\label{pmp}
	\|{\cal{D}}\btau\|_{Q}\le d\,\|{\cal{D}}\|_{\bf Q_{\infty}}
	\|\btau\|_Q\quad \ \forall\,
	{\cal{D}}\in{\bf Q_{\infty}},\ \btau\in Q.
\end{equation}

For any normed space
$(X,\|\cdot\|_X)$ we denote by $C([0,T];X)$  the space of continuous functions defined on $[0,T]$ with values in $X$. It is well known that if $X$ is a Banach space, then
$C([0,T];X)$ is a Banach space,
equipped with the following norm
\begin{equation*}
	\|\bv\|_{C([0,T];X)} =
	\max_{t\in [0,T]}\,\|\bv(t)\|_X\qquad\forall\, \bv\in C([0,T];X).
\end{equation*}
Moreover, if $K\subset X$, we denote by
  $C([0,T];K)$ the set of functions in
 $C([0,T];X)$ with values in $K$.

 Let now  $(X,\|\cdot\|_X)$  and $(Y,\|\cdot\|_Y)$ be two normed spaces. We recall that  an operator $\Lambda:C([0,T],X)\to C([0,T],Y)$ is said to be a history-dependent operator if there exists $L>0$ such that
\begin{equation*}
	\|\Lambda \bv(t)-\Lambda\bw(t)\|_Y\le L\int_0^t\
	\|\bv(s)-\bw(s)\|_X\,ds\qquad\forall\,\bv,\, \bw\in C([0,T],X),\ t\in[0,T].
\end{equation*}
Introduced in \cite{SM-H}, history-dependent operators have
a number of useful properties, that can be found in the books \cite{SM2,SofMig}. Part of these properties will play a key role in the next sections of this paper.

\medskip\noindent
{\bf The inequality problem $\cP$.} The variational inequality we consider in this paper is  stated as follows.

\medskip\noindent
{\bf Problem $\cP$.} {\it
Find a function $\bu:
[0,T]\to V$ such that, for all $t\in
[0,T]$, the inequality below holds:}
\begin{eqnarray}
	&&\label{si} \bu(t)\in
	U,\quad(\BB\bvarepsilon({\bu}(t)),\bvarepsilon(\bv)-\bvarepsilon(\bu(t)))_Q\\[2mm]
	&&\qquad+
	\Big(\int_0^t\RR(t-s)\bvarepsilon({\bu}(s))\,ds,\bvarepsilon(\bv)-\bvarepsilon(\bu(t))\Big)_Q\nonumber
	\\[2mm]
	&&\qquad\qquad+\dual{P\bu(t)}{\bv-\bu(t)}\geq
	\dual{ \fb(t)}{\bv-\bu(t)}\quad \forall\,\bv\in U.\nonumber
\end{eqnarray}

\medskip
This problem is governed by the data $U$, $\BB$, $\RR$, $P$, $\fb$, assumed to satisfy the following conditions.
\begin{eqnarray}
&&\label{U} \mbox{ $U$ is a nonempty closed convex subset of $V$}. \\ [3mm]
&&\label{B}\left\{\begin{array}{ll}\BB :Q\to Q\ \mbox{is a linear continuous operator and}
	\\[1mm]
	\mbox{there exists $m_{\mathcal B}>0$ such that}\
	(\BB\btau,\btau)_Q\geq m_{\mathcal B}\,\|\btau\|^2_Q\ \ \forall\,\btau\in Q.	
\end{array}\right.
\end{eqnarray}
\begin{eqnarray}
&&\label{R}\RR \in C([0,T];{\bf Q_\infty}).\\ [3mm]
&&\label{P}\mbox{ $P: V\to V^*$ is a Lipschitz continuous monotone operator}.\\ [3mm]
&&\label{f} \fb\in C([0,T];V^*).
\end{eqnarray}

\medskip
Note that the second term in the variational inequality \eqref{si} includes a history-dependent operator. Moreover, with a particular choice of the data, the inequality models a Signorini viscoelastic contact problem, as we explain below in this section. For this reason, we refer to Problem $\cP$ as a Signorini-type history-dependent variational inequality.
Its unique solvability is provided by the following existence and uniqueness result.
\begin{Theorem}\label{t1}
	Assume \eqref{U}--\eqref{f}. Then, Problem $\cP$ has a unique solution  $\bu\in C([0,T];U)$.
\end{Theorem}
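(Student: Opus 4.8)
The plan is to recast Problem $\cP$ as a fixed-point problem for a history-dependent operator and then invoke the associated abstract fixed-point theorem. First I would introduce the operator $A:V\to V^*$ defined by $\dual{A\bu}{\bv}=(\BB\bvarepsilon(\bu),\bvarepsilon(\bv))_Q+\dual{P\bu}{\bv}$ and observe that, thanks to the coercivity of $\BB$ in \eqref{B}, the identity \eqref{684}, and the monotonicity of $P$ in \eqref{P}, the operator $A$ is strongly monotone with constant $m_{\mathcal B}$; likewise, the continuity of the linear operator $\BB$ together with the Lipschitz property of $P$ make $A$ Lipschitz continuous. This packages the non-integral part of \eqref{si} into a single strongly monotone, Lipschitz continuous operator, which is the framework in which elliptic variational inequalities are well-posed.

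Next, for an arbitrary ``history'' function $\beeta\in C([0,T];Q)$, I would freeze the integral term and consider, for each $t\in[0,T]$, the auxiliary elliptic variational inequality
\begin{equation*}
\bu_\beeta(t)\in U,\qquad \dual{A\bu_\beeta(t)}{\bv-\bu_\beeta(t)}+(\beeta(t),\bvarepsilon(\bv)-\bvarepsilon(\bu_\beeta(t)))_Q\ge\dual{\fb(t)}{\bv-\bu_\beeta(t)}\quad\forall\,\bv\in U.
\end{equation*}
Since $U$ is nonempty, closed and convex by \eqref{U} and $A$ is strongly monotone and Lipschitz continuous, the classical theory of elliptic variational inequalities guarantees that this problem has a unique solution $\bu_\beeta(t)\in U$ for every $t$. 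A standard stability estimate---subtracting the inequalities written at two times $t_1,t_2$, testing each with the other solution, and using the strong monotonicity of $A$ together with the continuity of $\beeta$ and $\fb$---yields a bound of the form $\|\bu_\beeta(t_1)-\bu_\beeta(t_2)\|_V\le m_{\mathcal B}^{-1}\big(\|\fb(t_1)-\fb(t_2)\|_{V^*}+\|\beeta(t_1)-\beeta(t_2)\|_Q\big)$, so that $\bu_\beeta\in C([0,T];U)$.

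I would then define $\Lambda:C([0,T];Q)\to C([0,T];Q)$ by $(\Lambda\beeta)(t)=\int_0^t\RR(t-s)\bvarepsilon(\bu_\beeta(s))\,ds$ and observe that solving Problem $\cP$ is equivalent to finding a fixed point of $\Lambda$: namely, $\bu$ solves $\cP$ if and only if $\bu=\bu_{\beeta}$ with $\beeta=\Lambda\beeta$. The crucial estimate is the Lipschitz dependence of $\bu_\beeta$ on $\beeta$, obtained by testing the two auxiliary inequalities (for $\beeta_1,\beeta_2$) against each other and invoking \eqref{684}:
\begin{equation*}
\|\bu_{\beeta_1}(t)-\bu_{\beeta_2}(t)\|_V\le \frac{1}{m_{\mathcal B}}\,\|\beeta_1(t)-\beeta_2(t)\|_Q.
\end{equation*}
Combining this with the regularity \eqref{R} and the bound \eqref{pmp} gives $\|(\Lambda\beeta_1)(t)-(\Lambda\beeta_2)(t)\|_Q\le m_{\mathcal B}^{-1}d\,\|\RR\|_{C([0,T];{\bf Q_\infty})}\int_0^t\|\beeta_1(s)-\beeta_2(s)\|_Q\,ds$, that is, $\Lambda$ is a history-dependent operator on $C([0,T];Q)$. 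The fixed-point theorem for such operators (whose $n$-th iterate is a contraction via the usual $(LT)^n/n!$ estimate) then supplies a unique fixed point $\beeta^*$, and $\bu:=\bu_{\beeta^*}\in C([0,T];U)$ is the desired unique solution; uniqueness can alternatively be read off directly from the displayed Lipschitz estimate combined with a Gronwall inequality.

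The argument is a fairly standard application of history-dependent operator theory, so I do not expect a genuinely hard step. The one point requiring care is the well-posedness and temporal continuity of the frozen problem, since that is precisely where the hypotheses \eqref{U}, \eqref{B} and \eqref{P} must be assembled into the strong-monotonicity/Lipschitz framework, and where the passage from pointwise-in-$t$ solvability to a $C([0,T];U)$-valued solution has to be justified.
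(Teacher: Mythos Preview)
Your proposal is correct and follows precisely the strategy the paper indicates (the paper does not give its own proof but refers to \cite[p.~180]{SM2}, describing it as ``based on standard arguments on elliptic variational inequalities and a fixed point property of history-dependent operators''). Your decomposition into a frozen elliptic variational inequality governed by the strongly monotone Lipschitz operator $A=\WW+P$ and a history-dependent fixed-point map $\Lambda$ on $C([0,T];Q)$ is exactly that argument.
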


A proof of Theorem \ref{t1} can be found in
\cite[p.180]{SM2}. This is based on standard arguments on elliptic variational inequalities and a fixed point property of history-dependent operators.

\medskip\noindent
{\bf Mechanical interpretation.} Problem $\cP$ is inspired by a mathematical model of contact that we describe in what follows. Consider a viscoelastic body which occupies, in its reference configuration, the domain $\Omega$. Besides  $\Gamma_1$, we now assume that $\Gamma_2$ and $\Gamma_3$ are given measurable parts of $\Gamma$ such that $\Gamma=\bar{\Gamma}_1\cup\bar{\Gamma}_2\cup\bar{\Gamma}_3$ and $\Gamma_i\cap\Gamma_j
=\emptyset$, for $i,\ j=1,\, 2,\ 3$, $i\ne j$.
The body is fixed on the  part $\Gamma_1$ of its boundary, is acted by a time-dependent surface traction on $\Gamma_2$ and is in frictionless contact with an obstacle on $\Gamma_3$, the so-called foundation. The foundation is made of a rigid body covered by a layer of deformable material. Moreover, a time-dependent body force is acting on the viscoelastic body. We model the material's behaviour with a linear constitutive law with long term memory and  assume that the process is quasistatic, that is, we neglect the acceleration term in the equation of motion.

\smallskip
 The classical formulation of the mechanical problem described above  is given by a system of partial differential equations, obtained by gathering the constitutive law, the equilibrium equation, the displacement and traction boundary condition, as well as the frictionless contact condition.  In a weak  formulation, the model leads to a variational inequality of the form \eqref{si}
in which the unknown $\bu$  represents  the displacement field
and, hence, $\bvarepsilon(\bu)$ is the linearized strain tensor. The details can be found in \cite{SM2,SofMig} and, therefore, we skip them. We just restrict ourselves to mention
the following.

\smallskip
Firstly,  the operator $\BB$ is given by the equality
\[(\BB\bsigma,\btau)_Q=\int_\Omega\BB_{ijkl}\sigma_{ij}\tau_{kl}\,dx\qquad \forall\, \bsigma=(\sigma_{ij}),\, \btau=(\tau_{ij})\in Q,\]
where the  components $\BB_{ijkl}\in L^\infty(\Omega)$ satisfy the usual symmetry and coercivity condition. Moreover,  $\RR$ is the relaxation tensor which describes the memory behavior of the material.

\smallskip
The set $U$ represents the set of admissible displacements fields  given by
\begin{equation}\label{eq:convexset}
	U=\{\,\bv\in V:\, \bv_\nu\le g\ \ {\rm a.e.\ on\ }\Gamma_3\,\},
\end{equation}
where $g\in L^2(\Gamma_3)$ is a positive function which measures the thickness of the deformable material.
Condition $\bu(t)\in  U$ shows that the normal displacement is restricted on the potential contact surface. It arises as a consequence of the Signorini nonpenetration condition  we use in order to model the reaction of the rigid part of the foundation.

\smallskip
By contrast, the deformable layer  allows penetration and, therefore, we model its behavior with the so-called normal compliance condition. This assumption {gives} rise to the operator $P: V\to V^*$ in $\eqref{si}$, defined by the equality
\begin{equation}\label{pp}
	\dual{P\bu}{\bv}=\int_{\Gamma_3} p(\bu_\nu)\bv_\nu\,da\quad\
	\forall\,\bu,\,\bv\in V,
\end{equation} where $p$ is a given compliance function assumed to satisfy the following conditions:
\begin{equation}
		\left\{\begin{array}{ll} {\rm (a)\ }
			p:\Gamma_3 \times\mathbb{R}\to\mathbb{R}_+.\\ [1mm]
			{\rm (b)\ There\ exists}\, L_p>0 {\rm\ such\ that\ }\\
			{} \qquad  |p(\bx,r_1)-p(\bx,r_2)|
			\le L_p\,|r_1-r_2|\\
			{}\qquad\quad \forall\,
			r_1,\,r_2\in \mathbb{R},\ {\rm a.e.\ } \bx\in\Gamma_3.\\ [1mm]
			{\rm (c)\ }(p(\bx,r_1)-p(\bx,r_2))(r_1-r_2)\ge 0\\
			{}\qquad\quad \forall\,
			r_1,\,r_2\in \mathbb{R},\ {\rm a.e.\ } \bx\in\Gamma_3.\\ [1mm]
			{\rm (d)\ The\ mapping\ } \bx\mapsto
			p(\bx,r) {\rm\ is\ measurable\ on}\ \Gamma_3,
			\\
			{}\qquad{\rm for\ any\ }r\in \mathbb{R}.\\ [1mm]
			{\rm (e)\ }p(\bx,r)=0\ {\rm for\ all\ }r\le 0,\ {\rm a.e.}\
			\bx\in\Gamma_3.\\ [1mm]
			{\rm (f)\ }{\rm The\ mapping\ } r\mapsto
			p(\bx,r) {\rm\ is\ semidifferentiable\ on}\ \R,\  {\rm a.e.}\
			\bx\in\Gamma_3.
		\end{array}\right.
		\label{G7n}
	\end{equation}
We recall that a function is semidifferentiable at a point  if it has a left- or right derivative there. That is, Assumption \eqref{G7n}(f) accounts to
the existence of at least one of the following limits
\[\lim_{\tau \searrow 0}\frac{p(\bx,r+\tau)-p(\bx,r)}{\tau} \]
\[\lim_{\tau \searrow 0}\frac{p(\bx,r)-p(\bx,r-\tau)}{\tau} \]for each $r\in \R,{\rm a.e.}\ \bx \in \Gamma_3.$ This is equivalent to the directional differentiability of $p(\bx,\cdot):\R \to \R \ {\rm a.e.}\ \bx \in \Gamma_3,$ see Definition \ref{def:dd}.a) below.

Assumption \eqref{G7n} combined with the definition \eqref{pp} and the trace inequality \eqref{trace} guarantees that the operator $P$ satisfies condition \eqref{P}. Moreover, a classical example of function $p$ which satisfies \eqref{G7n} is given by $p(\bx,r)= cr_+$ for all $\bx\in \Gamma_3$ and $r\in\mathbb{R}$, where $c>0$ denotes a stiffness coefficient and $r_+$ is the positive part of $r$.

Finally, the function $\fb$ in \eqref{si} is given by
\begin{equation}\label{cop}\dual{ \fb(t)}{\bv}=\int_{\Omega} \fb_0(t)\cdot\bv\,dx+\int_{\Gamma_2}\fb_2(t)\cdot \bv\,da\quad\ \forall\, t\in[0,T], \quad \forall\,\bv \in V,
\end{equation}
where ${\fb}_0$ and ${\fb}_2$  denote the densities of the body forces and surface tractions, respectively.
 We easily observe that if
\begin{equation}\label{ipof0f2}
\fb_0\in H^1(0,T;L^2(\Omega)^d),\quad \fb_2\in  H^1(0,T; L^2(\Gamma_2)^d),
\end{equation}
then the condition \eqref{f} is satisfied.

\section{Equivalent formulation}\label{s2n}
\setcounter{equation}0

In this section we keep the assumption in Theorem \ref{t1} and provide an equivalent formulation of Problem $\cP$, which is  in a form of a fixed point problem. To this end we introduce in what follows  some  useful operators.

First, we consider the operator $\bvarepsilon:V \to Q$
defined by \eqref{de} and denote by  $\bvarepsilon^*: Q \to V^*$ its adjoint. Then,
\begin{equation}\label{es}
	\dual{\bvarepsilon^*(\beeta)}{\bv} = (\beeta,\bvarepsilon(\bv))_Q \quad\ \forall\,\beeta\in Q,
	\,\bv \in V
\end{equation}
and, therefore,
\begin{equation}\label{esn}
\|\bvarepsilon^*(\beeta)\|_{V^*}\le
\|\beeta\|_{Q} \quad\ \forall\,\beeta\in Q.
\end{equation}
We shall use inequality \eqref{esn} in various places in the rest of the manuscript.

Next, we define the operator $\WW:V\to V^*$ by the equality
\begin{equation}\label{ww}
\WW=\bvarepsilon^* \BB \bvarepsilon.
\end{equation}
Using \eqref{es}, \eqref{ww}, assumption \eqref{B} and the equality \eqref{684}, it follows that
\begin{eqnarray}
&&\label{wa}\dual{\WW \bu}{\bv}=(\BB\bvarepsilon(\bu),\bvarepsilon(\bv))_Q
\quad\ \ \forall\, \bu,\, \bv \in V,\\ [2mm]	
&&\label{coerc}
\dual{\WW \bv}{\bv}=(\BB  \bvarepsilon(\bv), \bvarepsilon(\bv))_Q \geq m_{\BB} \| \bvarepsilon(\bv)\|_Q^2 =m_{\BB}\|\bv\|^2_V \quad \forall\,\bv \in V,
\end{eqnarray}
which shows that the operator $\WW$ is linear, continuous 	and coercive.

We now use the above properties of $\WW$, assumptions \eqref{U} and \eqref{P} on the set $U$ and the operator $P$, respectively, together  with standard arguments on elliptic variational inequalities. In this way we deduce that for any $\bomega\in V^*$ there exists a unique element $\bz\in V$ such that
\begin{equation*}
	\bz\in U,\quad  \dual{(\WW+P)(\bz)}{\bv-\bz}\geq \dual{ \bomega}{\bv-\bz}\quad \forall\,\bv\in U.\nonumber
\end{equation*}
We denote by $\FF:V^* \ni \bomega \mapsto \bz \in U$ the solution map of this inequality, that is,
\begin{equation}\label{eq}
\bz=\FF\bomega\ \Longleftrightarrow\ \bz\in U,\quad  \dual{(\WW+P)(\bz)}{\bv-\bz}\geq \dual{ \bomega}{\bv-\bz}\quad \forall\,\bv\in U.
\end{equation}
Moreover,  using standard arguments combined with  inequality \eqref{coerc}, it is easy to see that $\FF$ is a Lipschitz continuous operator, with constant $\frac{1}{m_{\mathcal B}}$, i.e.,
\begin{equation}\label{Fl}
\|\FF\bomega_1-\FF\bomega_2\|_V\le \frac{1}{m_{\mathcal B}}\,\|\bomega_1-\bomega_2\|_{V^*}\quad\ \forall\, \bomega_1,\, \bomega_2\in V^*.
\end{equation}	

With these preliminaries, we are in a position to consider the following problem.

\medskip\noindent
{\bf Problem $\cQ$.} {\it
	Find a function $\bu:
	[0,T]\to V$ such that, for all $t\in
	[0,T]$, the following equality holds:}
	\begin{equation}\label{fp}
		\bu(t)
			=   \FF\Big[ \fb(t)- \eps^*\Big(\int_0^t  \RR(t-s)\eps(\bu(s))\,ds\Big)\Big].
	\end{equation}

\medskip
The link between Problems $\cP$ and $\cQ$ is provided by the following equivalence result.

\begin{Theorem}\label{t1n} 	Assume \eqref{U}--\eqref{f} and let
$\bu\in C([0,T];U)$. Then, $\bu$ is a solution to problem $\cP$ if and only if $\bu$ is a solution of Problem $\cQ$.
\end{Theorem}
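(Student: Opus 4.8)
The plan is to establish the equivalence pointwise in time, treating $t\in[0,T]$ as fixed and showing that the variational inequality \eqref{si} at time $t$ is nothing but the defining relation \eqref{eq} of the solution operator $\FF$ evaluated at a suitable argument. To this end, for a fixed $\bu\in C([0,T];U)$ and $t\in[0,T]$ I would introduce the element
$$\bomega(t):=\fb(t)-\eps^*\Big(\int_0^t\RR(t-s)\eps(\bu(s))\,ds\Big),$$
and first check that it is well defined in $V^*$: since $\bu\in C([0,T];U)$ and $\RR\in C([0,T];{\bf Q_\infty})$, the integral $\int_0^t\RR(t-s)\eps(\bu(s))\,ds$ is a well-defined element of $Q$ in view of \eqref{pmp}, and applying the adjoint $\eps^*$ together with \eqref{f} places $\bomega(t)$ in $V^*$. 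With this notation, the equality \eqref{fp} defining Problem $\cQ$ reads $\bu(t)=\FF\bomega(t)$ for all $t\in[0,T]$.

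The core of the argument is then a direct unwinding of the definition \eqref{eq} of $\FF$. By \eqref{eq}, the identity $\bu(t)=\FF\bomega(t)$ is equivalent to $\bu(t)\in U$ together with
$$\dual{(\WW+P)(\bu(t))}{\bv-\bu(t)}\ge\dual{\bomega(t)}{\bv-\bu(t)}\qquad\forall\,\bv\in U.$$
I would rewrite both sides using the defining properties of the operators involved: the $\WW$-term becomes $(\BB\eps(\bu(t)),\eps(\bv)-\eps(\bu(t)))_Q$ by \eqref{wa} and the linearity of $\eps$, while on the right-hand side the memory contribution is transferred via the adjoint identity \eqref{es}, which yields
$$\dual{\eps^*\Big(\int_0^t\RR(t-s)\eps(\bu(s))\,ds\Big)}{\bv-\bu(t)}=\Big(\int_0^t\RR(t-s)\eps(\bu(s))\,ds,\,\eps(\bv)-\eps(\bu(t))\Big)_Q.$$
Transposing this term to the left-hand side of the inequality reproduces exactly the three terms on the left of \eqref{si}, leaving $\dual{\fb(t)}{\bv-\bu(t)}$ alone on the right. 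Since every manipulation is an equivalence (each step being reversible), this shows that $\bu(t)=\FF\bomega(t)$ holds if and only if $\bu(t)$ satisfies \eqref{si} at time $t$.

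Finally, since the argument is valid for every $t\in[0,T]$ and $\bu$ is assumed to lie in $C([0,T];U)$ in both formulations, quantifying over $t$ gives that $\bu$ solves $\cP$ if and only if $\bu$ solves $\cQ$. There is no genuine analytical obstacle here: the statement is a reformulation obtained by absorbing the memory term into the forcing through the adjoint $\eps^*$, so the only points demanding care are the well-definedness of $\bomega(t)$ in $V^*$ and the correct bookkeeping in the transfer based on \eqref{es}. The monotone, Lipschitz structure ensuring that $\FF$ is single-valued was already secured when \eqref{eq} and \eqref{Fl} were established, so the solvability and uniqueness of $\FF\bomega(t)$ need not be revisited.
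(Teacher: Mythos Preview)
Your proof is correct and follows essentially the same approach as the paper's own proof: fix $t\in[0,T]$, use \eqref{wa} and \eqref{es} to rewrite the variational inequality \eqref{si} in the form $\dual{(\WW+P)\bu(t)}{\bv-\bu(t)}\ge\dual{\bomega(t)}{\bv-\bu(t)}$, and invoke the defining equivalence \eqref{eq} of $\FF$ to conclude. Your version is in fact slightly more detailed, as you explicitly verify that $\bomega(t)\in V^*$ and stress the reversibility of each step, points the paper takes for granted.
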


\begin{proof} Let $t\in[0,T]$. We use  \eqref{es}, \eqref{wa}, \eqref{eq} to see that the following equivalences hold:
\begin{eqnarray*}
&&\mbox{$\bu(t)$ satisfies inequality \eqref{si}} \Longleftrightarrow\ \\ [2mm]
&&\dual{(\WW+P)\bu(t)}{\bv-\bu(t)}\ge \dual{\fb(t)-\eps^*\Big(\int_0^t  \RR(t-s)\eps(\bu(s))\,ds\Big)}{\bv-\bu(t)} \Longleftrightarrow\ \\ [2mm]
&&\mbox{$\bu(t)$ satisfies equation \eqref{fp}},
\end{eqnarray*}
which concludes the proof.	
\end{proof}

Note that the importance of Problem $\cQ$ arises from the fact that it represents an alternative to Problem $\cP$. Indeed, in various estimates we present  below it will be more convenient to use the nonlinear nonsmooth equation \eqref{fp} instead of the variational inequality \eqref{si}. Moreover, note that
Theorems \ref{t1} and \ref{t1n} show that, under assumptions  \eqref{U}--\eqref{f}, Problem $\cQ$ has a unique solution with regularity $\bu\in C([0,T];U)$.

\begin{Remark}\label{r1}
Consider now the operator $\Lambda:  C([0,T];V)\to  C([0,T];U)$ defined by
\begin{equation}\label{la}
\Lambda\bu(t)= \FF\Big[ \fb(t)- \eps^*\Big(\int_0^t  \RR(t-s)\eps(\bu(s))\,ds\Big)\Big] \quad\ \forall\,\bu\in C([0,T];V),\, t\in[0,T].
\end{equation}
Then, it is easy to see that Problem $\cQ$ is equivalent with the fixed point problem of finding a function $\bu\in C([0,T];V)$ such that
\begin{equation}\label{ffp}
\bu=\Lambda\bu.
\end{equation}
Moreover, the equivalence result in Theorem $\ref{t1n}$ shows that, roughly speaking, Problem $\cP$ has a fixed point structure. We shall use this remark in Section $\ref{s5}$, in the study of well-posedness of Problems $\cP$ and $\cQ$.
\end{Remark}

Theorems $\ref{t1}$ and $\ref{t1n}$ allow us
to define  the control-to-state operator associated to Problems $\cP$ and $\cQ$. This will play an important role in the analysis we perform in Sections $\ref{s3}$ and $\ref{s4}$ below.

\begin{Definition}\label{r2}
The {\em  solution operator} of Problems $\cP$ and $\cQ$ is given by
$S: C([0,T];V^*)\to C([0,T];U) $  defined as
\begin{equation}\label{so}
	S(\fb)=\bu.
\end{equation}
This associates to any  function  $\fb\in C([0,T];V^*)$ the common solution
$\bu \in C([0,T];U)$ of Problems $\cP$ and $\cQ$. \end{Definition}

We now end this section with an elementary example.

\begin{Example}\label{ex0} Consider Problem $\cP$ in the particular case when $d=1$, $\Omega=(0,1)$, $\Gamma_1=\{0\}$,
$\Gamma_2=\emptyset$,	$\Gamma_3=\{1\}$,  $g=1$, $\BB\tau=\tau$ for all $\tau\in Q$, $\RR(t)=1$ for all $t\in[0,T]$, $Pv=0_{V^*}$   for all $v\in V$ and $f$ to be specified later. Note that in this particular case $\ve(u)=u'$,
where the prime represents the derivative with respect to the spatial variable $x$. Moreover,
\begin{eqnarray*}
	&&V=\{\, v\in H^1(0,1):\  v(0) =0\},\quad Q=L^2(0,1),\\ [2mm]
	&&U=\{\, v\in H^1(0,1):\  v(0) =0,\ \ v(1)\le 1\,\}.
\end{eqnarray*}
Then, it is easy to see that Problem $\cP$ consists of finding a function $u:[0,T]\to V$ such that, for all $t\in[0,T]$, the following holds:
\begin{equation}\label{41}
	u(t)\in U,\quad\Big(u'(t)+\int_0^t u'(s)\,ds,v'-u'(t)\Big)_Q\ge \dual{f(t)}{v-u(t)}\quad\forall\, v\in U.
\end{equation}
Take {$f\in C([0,T];V^*)$ as
$\dual{f(t)}{v}=(1,v')_Q$  for all $v\in V$, $t\in[0,T]$} and recall that
	\[(\sigma,\tau)_Q=\int_0^1 \sigma\tau\,dx\quad\ \forall\, \sigma,\ \tau\in Q.\]
Then,  \eqref{41} becomes
\begin{equation}\label{42}
	u(t)\in U,\quad \int_0^1\Big(u'(t)+\int_0^t u'(s)\,ds-1\Big)(v'-u'(t))\,dx\ge 0\quad\ \forall\, v\in U.
\end{equation}

Let $u:[0,1]\times[0,T]\to\R$ be the function given by
\begin{equation}\label{sol}
u(x,t)=xe^{-t}\quad\forall\,(x,t)\in [0,1]\times[0,T].
\end{equation}
It is easy to see that  $u(t)\in U$  and
\[u'(t)+\int_0^t u'(s)\,ds=1,\] for  any $t\in[0,T]$. This shows that $u$ satisfies inequality \eqref{42} and, therefore, it is the  solution to Problem $\cP$.
Moreover,  it is the solution to the corresponding  Problem $\cQ$, as it follows from the equivalence result in Theorem $\ref{t1n}$.

\end{Example}

\section{An optimal control problem}\label{s3}
\setcounter{equation}0
In this section we  study an optimal control problem associated to the variational inequality \eqref{si}.  To this end
we  assume in what follows that \eqref{U}--\eqref{P} hold. Moreover, we consider a function $j$, a real reflexive Banach space $Z$, an operator $\KK,$ a constant $\beta$ and a set $\MM$
which satisfy the following conditions:
\begin{eqnarray}
&&\label{z1}j:C([0,T];V) \to \R_+\ \mbox{is  lower semicontinuous,}\\ [2mm]
&&\label{z11a} \KK:H^1(0,T;Z) \to C([0,T];V^*) \mbox{ is completely continuous,}\\[2mm]
&&\label{z1n}\beta>0,\\ [2mm]
&&\label{z2} \mbox{$\MM$ is a weakly sequentially closed subset of the space  $H^1(0,T;Z)$.}
\end{eqnarray}

Note that the property \eqref{z11a} implies that
\begin{equation}\label{ce}\bphi_n \rightharpoonup \bphi \,\text{ in }\,H^1(0,T;Z)\ \Longrightarrow \ \KK(\bphi_n) \to \KK(\bphi)\, \text{ in }\,C([0,T];V^*).\end{equation}
This implication  will represent an important ingredient in the proof of Theorem \ref{t2} below. {Examples of a function $j$ and operators $\KK$ which satisfy conditions \eqref{z1} and \eqref{z11a}, respectively, will be presented at  the end of this section, together with the correponding mechanical interpretations.}

Let $\cL:\MM\to\R$ be the cost function defined by
\begin{equation}\label{z3}
\cL(\bg)=j(S(\KK(\bg)))+\beta\,\|\bg\|^2_{H^1(0,T;Z)}\quad\ 
{\forall\, \bg\in\MM},
\end{equation}
where we recall that $S$ represents the solution operator from \eqref{so}. Then, the optimal control problem we consider can be formulated as follows.

\medskip\noindent
{\bf Problem $\cO$.} {\it
Find $\bg^*\in\MM$ such that}
\begin{equation}\label{z4}
\cL(\bg^*)\le \cL(\bg)\qquad\forall\, \bg\in \MM.
\end{equation}

\medskip
Our main result in this section is the following existence result.

\begin{Theorem}\label{t2}Assume \eqref{U}--\eqref{P} and \eqref{z1}--\eqref{z2}. Then, Problem $\cO$ admits at least one solution.
\end{Theorem}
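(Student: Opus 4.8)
The plan is to apply the direct method of the calculus of variations. Since $j\geq 0$ by \eqref{z1} and $\beta>0$ by \eqref{z1n}, the cost functional $\cL$ from \eqref{z3} is bounded below by $0$ on $\MM$; hence $m:=\inf_{\bg\in\MM}\cL(\bg)$ is a finite nonnegative number, and I fix a minimizing sequence $\{\bg_n\}\subset\MM$ with $\cL(\bg_n)\to m$. The bound $\beta\,\|\bg_n\|^2_{H^1(0,T;Z)}\leq\cL(\bg_n)$ shows that $\{\bg_n\}$ is bounded in $H^1(0,T;Z)$. Because $Z$ is reflexive, the Bochner--Sobolev space $H^1(0,T;Z)$ is reflexive as well, so a subsequence, still denoted $\{\bg_n\}$, satisfies $\bg_n\rightharpoonup\bg^*$ in $H^1(0,T;Z)$. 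The weak sequential closedness assumption \eqref{z2} then guarantees $\bg^*\in\MM$, so $\bg^*$ is a legitimate candidate minimizer.

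The heart of the argument, and the main obstacle, is the sequential continuity of the composite map $\bg\mapsto S(\KK(\bg))$ along this weakly convergent sequence. First, the complete continuity of $\KK$ in the form \eqref{ce} yields $\KK(\bg_n)\to\KK(\bg^*)$ strongly in $C([0,T];V^*)$. It then remains to transfer this strong convergence through the solution operator $S$, i.e.\ to establish that $S:C([0,T];V^*)\to C([0,T];V)$ is continuous. Since no such result is available earlier, I would prove it directly from the fixed point reformulation \eqref{fp}. Given $\fb_1,\fb_2\in C([0,T];V^*)$ with solutions $\bu_1=S(\fb_1)$ and $\bu_2=S(\fb_2)$, subtracting the two instances of \eqref{fp} and using the Lipschitz bound \eqref{Fl} on $\FF$ together with \eqref{esn}, \eqref{pmp} and \eqref{684} gives, for each $t\in[0,T]$,
\begin{equation*}
\|\bu_1(t)-\bu_2(t)\|_V\leq \frac{1}{m_{\mathcal B}}\,\|\fb_1-\fb_2\|_{C([0,T];V^*)}+\frac{c_{\RR}}{m_{\mathcal B}}\int_0^t\|\bu_1(s)-\bu_2(s)\|_V\,ds,
\end{equation*}
with $c_{\RR}=d\,\|\RR\|_{C([0,T];{\bf Q_\infty})}$. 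A Gronwall argument then produces the Lipschitz estimate $\|S(\fb_1)-S(\fb_2)\|_{C([0,T];V)}\leq \frac{1}{m_{\mathcal B}}\,e^{c_{\RR}T/m_{\mathcal B}}\,\|\fb_1-\fb_2\|_{C([0,T];V^*)}$, so in particular $S(\KK(\bg_n))\to S(\KK(\bg^*))$ strongly in $C([0,T];V)$.

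With this convergence in hand, the conclusion follows from lower semicontinuity. On the one hand, \eqref{z1} and the strong convergence $S(\KK(\bg_n))\to S(\KK(\bg^*))$ give $j(S(\KK(\bg^*)))\leq\liminf_n j(S(\KK(\bg_n)))$. On the other hand, the norm of $H^1(0,T;Z)$ is weakly sequentially lower semicontinuous, whence $\|\bg^*\|^2_{H^1(0,T;Z)}\leq\liminf_n\|\bg_n\|^2_{H^1(0,T;Z)}$. Adding these inequalities and using $\liminf_n a_n+\liminf_n b_n\leq\liminf_n(a_n+b_n)$ yields $\cL(\bg^*)\leq\liminf_n\cL(\bg_n)=m$. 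Since $\bg^*\in\MM$ forces $\cL(\bg^*)\geq m$, I conclude $\cL(\bg^*)=m$, so $\bg^*$ solves Problem $\cO$, which proves the claim.
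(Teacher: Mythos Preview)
Your proof is correct and follows essentially the same route as the paper: direct method with a minimizing sequence, boundedness and weak compactness in $H^1(0,T;Z)$, complete continuity of $\KK$, Lipschitz continuity of $S$ (which the paper isolates as a separate lemma but proves by the identical fixed-point-plus-Gronwall argument you give inline), and then lower semicontinuity of the two terms in $\cL$.
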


To provide the proof of this theorem we need the following
property of the operator $S$.

\begin{Lemma}\label{l2}
Assume \eqref{U}--\eqref{P}. Then, the solution operator defined in \eqref{so} is a Lipschitz continuous operator with Lipschitz constant $K=K(\mathcal B,d, \mathcal R,T)$, i.e.,
\begin{eqnarray*}
			\|S(\fb)-S(\widetilde \fb)\|_{C([0,T];V)}\leq K\|\fb-\widetilde \fb\|_{C([0,T];V^*)} \quad \forall\, \fb,\widetilde{\fb}\in C([0,T];V^*).
	\end{eqnarray*}
\end{Lemma}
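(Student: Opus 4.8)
The plan is to prove the Lipschitz estimate by working with the fixed point formulation (Problem $\cQ$) rather than the variational inequality directly, since equation \eqref{fp} is more amenable to straightforward estimation. Write $\bu=S(\fb)$ and $\widetilde\bu=S(\widetilde\fb)$, so that for each $t\in[0,T]$ we have $\bu(t)=\FF[\fb(t)-\eps^*(\int_0^t\RR(t-s)\eps(\bu(s))\,ds)]$ and likewise for $\widetilde\bu$. Subtracting and applying the Lipschitz continuity of $\FF$ from \eqref{Fl}, I would obtain
\begin{eqnarray*}
\|\bu(t)-\widetilde\bu(t)\|_V\le\frac{1}{m_{\mathcal B}}\Big\|\fb(t)-\widetilde\fb(t)-\eps^*\Big(\int_0^t\RR(t-s)(\eps(\bu(s))-\eps(\widetilde\bu(s)))\,ds\Big)\Big\|_{V^*}.
\end{eqnarray*}

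The next step is to bound the right-hand side. Using the triangle inequality, the adjoint bound \eqref{esn}, the estimate \eqref{pmp} applied to $\RR(t-s)\in{\bf Q_\infty}$, and the norm identity \eqref{684}, I would estimate the memory term by $\frac{d}{m_{\mathcal B}}\,\|\RR\|_{C([0,T];{\bf Q_\infty})}\int_0^t\|\bu(s)-\widetilde\bu(s)\|_V\,ds$. Setting $c_{\RR}:=\frac{d}{m_{\mathcal B}}\|\RR\|_{C([0,T];{\bf Q_\infty})}$, this yields the pointwise inequality
\begin{eqnarray*}
\|\bu(t)-\widetilde\bu(t)\|_V\le\frac{1}{m_{\mathcal B}}\|\fb(t)-\widetilde\fb(t)\|_{V^*}+c_{\RR}\int_0^t\|\bu(s)-\widetilde\bu(s)\|_V\,ds
\end{eqnarray*}
valid for all $t\in[0,T]$. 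I would then bound the first term uniformly by $\frac{1}{m_{\mathcal B}}\|\fb-\widetilde\fb\|_{C([0,T];V^*)}$.

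The final step is to remove the integral self-dependence by Gr\"onwall's inequality. Applying the integral form of Gr\"onwall's lemma to the function $t\mapsto\|\bu(t)-\widetilde\bu(t)\|_V$ gives $\|\bu(t)-\widetilde\bu(t)\|_V\le\frac{1}{m_{\mathcal B}}\|\fb-\widetilde\fb\|_{C([0,T];V^*)}\,e^{c_{\RR}t}$, and taking the maximum over $t\in[0,T]$ produces
\begin{eqnarray*}
\|S(\fb)-S(\widetilde\fb)\|_{C([0,T];V)}\le\frac{e^{c_{\RR}T}}{m_{\mathcal B}}\,\|\fb-\widetilde\fb\|_{C([0,T];V^*)},
\end{eqnarray*}
so that $K=\frac{1}{m_{\mathcal B}}\exp\!\big(\frac{dT}{m_{\mathcal B}}\|\RR\|_{C([0,T];{\bf Q_\infty})}\big)$, which indeed depends only on $\BB$ (through $m_{\mathcal B}$), $d$, $\RR$ and $T$ as claimed.

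I expect no serious obstacle here: the argument is a textbook history-dependent-operator estimate, and all the needed tools (the contraction-type bound \eqref{Fl} for $\FF$, the tensor bound \eqref{pmp}, and Gr\"onwall) are already available. The only point requiring mild care is tracking the constants correctly through \eqref{esn} and \eqref{pmp} so that the stated dependence $K=K(\BB,d,\RR,T)$ is transparent; the use of the fixed point equation \eqref{fp} rather than \eqref{si} is what makes the estimation clean, since subtracting two inequalities would force an additional and unnecessary argument.
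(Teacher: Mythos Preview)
Your proposal is correct and follows essentially the same approach as the paper: both start from the fixed point equation \eqref{fp}, apply the Lipschitz bound \eqref{Fl} for $\FF$ together with \eqref{esn} and \eqref{pmp} to obtain the same pointwise integral inequality, and then finish with Gr\"onwall to arrive at the identical constant $K=\frac{1}{m_{\mathcal B}}e^{cT}$ with $c=\frac{d}{m_{\mathcal B}}\|\RR\|_{C([0,T];{\bf Q_\infty})}$. The only cosmetic difference is that you bound $\|\fb(t)-\widetilde\fb(t)\|_{V^*}$ uniformly before invoking Gr\"onwall, whereas the paper applies Gr\"onwall to the pointwise term first and bounds afterwards; the outcome is the same.
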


	\begin{proof} Let $\fb,\, \widetilde \fb\in C([0,T];V^*)$ and  denote $S(\fb)=\bu$, $S(\widetilde \fb)=\widetilde \bu$.  Let $t\in[0,T]$.
	We use  equation \eqref{fp} and inequalities \eqref{Fl}, \eqref{esn}, \eqref{pmp}
	to see that
	\begin{eqnarray*}
			&& \|\bu(t)-\widetilde \bu(t)\|_V\leq \frac{1}{m_{\mathcal B}}\|\fb(t)-\widetilde \fb(t)\|_{V^*}\\
			&&\qquad+ \frac{d}{m_{\mathcal B}}\,\|\mathcal R\|_{C([0,T];{\bf Q_\infty)}}\int_0^t\|\bu(s)-\widetilde \bu(s)\|_V\,ds.
		\end{eqnarray*}
		Then, using  Gronwall's inequality we deduce that
		\begin{eqnarray}
		&&\label{z5}\|\bu(t)-\widetilde \bu(t)\|_V \leq\frac{1}{m_{\mathcal B}}\,\|\fb(t)-\widetilde \fb(t)\|_{V^*}\\[2mm]
		&&\qquad+ c \int_0^t {\frac{1}{m_{\mathcal B}}}\|\fb(s)-\widetilde \fb(s)\|_{V^*} e^{c(t-s)}\, ds\nonumber
		\end{eqnarray}
	where, here and below,  $c=\frac{d}{m_{\mathcal B}}\, \|\mathcal R\|_{C([0,T];{\bf Q_\infty})}$.
	Inequality \eqref{z5} implies that
	\begin{eqnarray*}
	&&	\|\bu(t)-\widetilde \bu(t)\|_V\leq\frac{1}{m_{\mathcal B}}\|\fb-\widetilde \fb\|_{C([0,T];V^*)}\\ [2mm]
	&&\quad+{\frac{c}{m_{\mathcal B}}}\,\|\fb-\widetilde \fb\|_{C([0,T];V^*)}\int_0^t e^{c(t-s)}\,ds\leq {\frac{1}{m_{\mathcal B}}\Big(1+e^{c t}-1\Big)\|\fb-\widetilde \fb\|_{C([0,T];V^*)}}
	\end{eqnarray*}
	and, therefore,
	\begin{eqnarray*}
			\|\bu-\widetilde \bu\|_{C([0,T];V)}\leq{ \frac{1}{m_{\mathcal B}}e^{c T}}\|\fb-\widetilde \fb\|_{C([0,T];V^*)}.
	\end{eqnarray*}
	We now take  $K(\mathcal B,d, \mathcal R,T)={\frac{1}{m_{\mathcal B}}e^{c T}}$ to conclude the proof.
	\end{proof}

We are now in the position to provide the proof of Theorem \ref{t2}.

\medskip

\begin{proof}
Denote by $L_0$ the lower bound of the cost function  $\cL$, that is
\begin{equation}\label{ii}
L_0=\inf_{\bg\in\MM}\cL({\bg})<+\infty.
\end{equation}
Let $\{{\bg}_n\}\subset\MM$ be a minimizing sequence, i.e.,
\begin{equation}\label{inf}
\cL(\bg_n)=	j(S(\KK(\bg_n)))+\beta\,\|\bg_n\|^2_{H^1(0,T;Z)}\to L_0.
\end{equation}
The convergence \eqref{inf} combined with assumption \eqref{z1} shows that the sequence $\{{\bg}_n\}$ is bounded in the reflexive Banach space $H^1(0,T;Z)$, whence we deduce that there exists an element $\bg^*\in H^1(0,T;Z)$ such that, passing to a subsequence again denoted by  $\{{\bg}_n\}$, we have
\begin{equation}\label{z6}
	\bg_n \rightharpoonup \bg{^*} \quad \text{ in }\ H^1(0,T;Z).
\end{equation}
On the other hand, inclusion $\{{\bg}_n\}\subset\MM$ combined with the weak convergence \eqref{z6} and assumption \eqref{z2}   imply that
\begin{equation}\label{z6n}
	\bg^*\in\MM.
\end{equation}
We now use \eqref{z6} and the property \eqref{ce} to deduce that
\[\KK(\bg_n) \to \KK(\bg^*) \quad \text{in }C([0,T];V^*).\]
Therefore, by the  continuity of the solution operator $S$, guaranteed by Lemma \ref{l2}, we find that
\begin{equation}\label{z11}S(\KK(\bg_n)) \to S(\KK(\bg^*)) \quad \text{in }\ C([0,T];V).
\end{equation}

We now use the strong convergence \eqref{z11}, assumption \eqref{z1}, and \eqref{z6} combined with the weak lower semicontinuity of the norm squared
to see that
\[j(S(\KK(\bg^*)))+\beta\|\bg^*\|^2_{H^1(0,T;Z)}\leq \liminf_{n \to \infty} \left(j(S(\KK(\bg_n)))+\beta\|\bg_n\|^2_{H^1(0,T;Z)}\right)\] and, therefore,  definition \eqref{z3} implies that

\begin{equation}\label{z12}
\cL(\bg^*)\le \liminf_{n \to \infty}\cL(\bg_n).
\end{equation}
Further, by \eqref{z12} and the convergence \eqref{inf} we see that
\begin{equation}\label{z12n}
	\cL(\bg^*)\le L_0.
\end{equation}
Since $\bg^*$ is admissible for our optimal control problem, see \eqref{z6n},
we now deduce from  \eqref{ii} that $\bg^*$ is a solution for the problem $\OO$, which concludes the proof.
\end{proof}

{We end this section with the remark  that an example of a function $j$ which satisfies condition \eqref{z1} is given by
\begin{equation}j(\bv)=\alpha\,\|\bv(T)-\bu_d\|_V^2\qquad\forall\,\bv\in C([0,T];V),\label{jh}
\end{equation}
where $\alpha>0$ and $\bu_d$ is a given element in $V$.  Consider now the contact model presented in Section \ref{s2}
and recall that in this case the function $\fb$ is given by \eqref{cop}. Then, condition \eqref{z11a} is satisfied  if   
\begin{equation}\label{comp}
Z:=L^2(\Omega)^d \times L^2(\Gamma_2)^d,\quad	\dual{ \KK(\bp,\bs)(t)}{\bv}=\int_{\Omega} \bp(t)\bv\,dx+\int_{\Gamma_2}\bs(t)\bv\,da
\end{equation}
for all  $(\bp,\bs)\in H^1(0,T;Z)$, $t\in [0,T]$ and $\bv\in V$.
Moreover, if $\fb_0$ is a fixed given datum, then   \eqref{z11a} holds if
\begin{equation}\label{comp0}
Z:= L^2(\Gamma_2)^d,\quad\dual{ \KK(\bs)(t)}{\bv}=\int_{\Omega} \fb_0(t)\bv\,dx+\int_{\Gamma_2}\bs(t)\bv\,da
\end{equation}
for all $\bs\in H^1(0,T;Z)$,  $t\in [0,T]$ and $\bv\in V$. Note that, in the case of the choice \eqref{jh}, \eqref{comp}, the significance  of the Problem $\cO$ is the  following: we are looking for  a density of body forces and a density of surface tractions  such that the  displacement of the viscoelastic body  at the end of  the time interval of interest is as close as possible to a given displacement field $\bu_d$. In the case of the choice \eqref{jh}, \eqref{comp0}, we are looking for  a density of surface tractions such that the  displacement of the viscoelastic body at the end of the time interval of interest is as close as possible to a given displacement field $\bu_d$. } Furthermore, in both cases above, the choice has to fulfil a minimum expenses  condition,  taken into account by the second term in \eqref{z3}. In fact, a compromise policy between the two aims (``$\bu(T)$ close to $\bu_d$" and ``minimal expenses") has to be found and the relative importance of each criterion with respect to the other is expressed by the choice of the weight coefficients $\alpha,\, \beta>0$.

\section{Sensitivity analysis}\label{s4}
\setcounter{equation}0

This section is dedicated to the study of the differentiability properties with respect to the given data $\fb$ of the solution operator $S$ associated to the Problems $\cP$ and $\cQ$, see \eqref{so}. \medskip
Before we  start, let us recall the concept of (Hadamard) directional differentiability.

\begin{Definition}\label{def:dd}
	Let $X,Y$ be Banach spaces.
	
	\smallskip
	{\rm a)} We say that the operator $\rho:X\to Y$ is {\rm directionally differentiable} at $x \in X$ in direction $\delta x\in X$ if the following limit exists:
	\[\rho'(x;\delta x):=\lim_{\tau \searrow 0}\frac{\rho(x+\tau \delta x)-\rho(x)}{\tau} \quad\  \text{\rm in }\ Y.\]
	
	\smallskip
{\rm 	b)} The operator $\rho:X  \to Y$ is said to be {\rm  Hadamard directionally differentiable} at $x \in X$ in direction $\delta x\in X$ if the following limit exists:
	\begin{equation*}
		\rho'_H(x;\delta x):=\lim_{\substack{\tau \searrow 0 \\ z \to \delta x}} \frac{\rho(x+\tau z)-\rho(x)}{\tau} \quad\ \text{\rm in }\ Y.
	\end{equation*}
In this case, the operator $\rho'_H(x;\cdot):X\to Y$ is called Hadamard directional derivative of $\rho$ at $x$.

\smallskip
{\rm c)} Let $Y_0$ be a Banach space so that $Y$ is continuously embedded in $Y_0$. We say that $\rho$ is Hadamard directionally differentiable from $X$ to $Y_0$,  if  the  difference quotient  defined in  b) converges only in the space $ Y_0$, for each $x,\delta x \in X$.
\end{Definition}

\begin{Remark}\label{rem:H}
If a mapping $\rho:X \to Y_0$ is Lipschitz continuous and directionally differentiable, then it is   Hadamard directionally differentiable. A proof of this result can be found in  \cite[Lemma 3.1.2b)]{schirotzek}.
\end{Remark}

In the rest of this section, we use the notation $M^\circ:=\{\bx^* \in V^* : \dual{\bx^*}{\bx} \leq 0 \quad \forall\, \bx \in M\}$ for the polar cone of a set $ M \subset V$. Given $\bx \in V^*$, we denote its annihilator by $ [\bx]^{\perp}:=\{\bmu \in V: \dual{\bx}{ \bmu}=0 \} $. Moreover, we abbreviate the set $\cup_{\alpha >0} \alpha M$ by $\R^+ M$. We also assume that the conditions \eqref{U}--\eqref{f} are satisfied in all what follows, even if we do not mention them  explicitly.

As already explained in the introduction, we shall make use of the fact that
$\cP$  can be equivalently rewritten as the fixed point problem $\cQ$, see Theorem \ref{t1n}. The problem $\cQ$ involves the solution operator of the variational inequality \eqref{eq} and to ensure its directional differentiability we need the following additional assumptions on the given data $U$ and $P$:
\begin{eqnarray}
&&\label{Up} \left\{\begin{array}{ll}
\mbox{ $U$ is such that}\\ [2mm]	
\mbox{
	$\overline{\R_+(U-\bz)} \cap [\bzeta]^{\perp} =\overline{\R_+(U-\bz) \cap [\bzeta]^{\perp}}\quad \, \forall\,(\bz,\bzeta) \in U \times \overline{\R^+( U-\bz)}^\circ, $
}\end{array}
\right.
\\ [4mm]
&&\label{Pd}\mbox{ $P: V\to V^*$ is directionally differentiable}.
\end{eqnarray}

\begin{Remark}\label{rem_p}

Note that the condition imposed on $U$ in \eqref{Up} is also known as polyhedricity \cite[Sec 6.4]{bs} and it plays an essential role when it comes to the differentiability of variational inequalities of first kind \cite{haraux,mp76}.

Let us emphasize that both \eqref{Up} and \eqref{Pd} are satisfied in the setting of the mechanical problem described in Section \ref{s2}. To see this, we recall that, in this particular framework, $U$ is given by \eqref{eq:convexset}. The fact that the set from  \eqref{eq:convexset} is polyhedric is a well-known fact, for a rigorous proof we refer to \cite[Thm.\ 3.36]{diss}.

Concerning the condition imposed on $P$ in \eqref{Pd}, we note that it is satisfied by the operator \eqref{pp} with $p$ as in \eqref{G7n}. This is due to standard arguments, based on assumption \eqref{G7n}$(f)$ and the Lebesgue dominated convergence theorem combined with the condition \eqref{G7n}$(b)$ and \eqref{trace},  as shown in \cite{susu_dam}, for instance.

\end{Remark}

Our first result in this section is the following.

\begin{Lemma}\label{lem:dd}Assume that \eqref{Up}--\eqref{Pd} are satisfied. Then, the operator $\FF:V^* \to V$ is directionally differentiable at $\bomega$ in any direction $\dom $. Moreover, the element $\dz:=\FF'(\bomega;\dom)$ is the unique solution of the variational inequality
	\begin{eqnarray}\label{eq:f_derivv}
	&&\dz \in  {\CC(\bz,\bzeta)},\ \  (\BB\bvarepsilon(\dz),\bvarepsilon(\beeta)-\bvarepsilon(\dz))_Q\\[2mm]
	&& \qquad\qquad+\dual{P'(\bz;\dz)}{\beeta-\dz}_V\ \geq \dual{\delta \bomega}{\beeta-\dz}_Y  \quad \forall\, \beeta \in {\CC(\bz,\bzeta)},\nonumber
	\end{eqnarray} where $\bz:=\FF(\bomega), \bzeta:=\bomega-(\WW+P)(\bz)$ and $\CC(\bz,\bzeta):=\overline{\R_+(U-\bz)} \cap [\bzeta]^{\perp}$.\end{Lemma}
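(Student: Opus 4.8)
The plan is to recognize $\FF$ as the solution operator of an elliptic variational inequality of the first kind governed by the operator $A:=\WW+P:V\to V^*$, and then to invoke the abstract sensitivity results for such inequalities established in \cite{ar,w0}. Indeed, by \eqref{eq}, $\bz=\FF\bomega$ is characterized by $\bz\in U$ together with $\dual{A\bz}{\bv-\bz}\ge\dual{\bomega}{\bv-\bz}$ for all $\bv\in U$, which is precisely the format treated in those references. My first task is therefore to check that $A$ and $U$ satisfy the hypotheses required there.

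For the operator $A$ I would verify three properties. First, strong monotonicity: since $\WW$ is linear, \eqref{coerc} gives $\dual{\WW(\bv-\bw)}{\bv-\bw}\ge m_{\mathcal B}\|\bv-\bw\|_V^2$, and $P$ is monotone by \eqref{P}, so $A$ is strongly monotone with constant $m_{\mathcal B}$. Second, Lipschitz continuity: $\WW$ is linear and continuous, while $P$ is Lipschitz by \eqref{P}. Third, Hadamard directional differentiability: the linear continuous map $\WW$ is differentiable with $\WW'(\bz;\dz)=\WW\dz$, while $P$ is directionally differentiable by \eqref{Pd} and, being also Lipschitz, Hadamard directionally differentiable by Remark \ref{rem:H}; by additivity of directional derivatives, $A'(\bz;\dz)=\WW\dz+P'(\bz;\dz)$. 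For the set $U$, the polyhedricity hypothesis required in \cite{ar,w0} is exactly assumption \eqref{Up}. I would also record the preliminary observation that the pair $(\bz,\bzeta)$ with $\bzeta:=\bomega-A\bz$ fits the quantifier in \eqref{Up}: testing \eqref{eq} shows $\dual{\bzeta}{\bv-\bz}\le 0$ for all $\bv\in U$, hence $\dual{\bzeta}{\bw}\le 0$ for every $\bw\in\overline{\R_+(U-\bz)}$, i.e. $\bzeta\in\overline{\R^+(U-\bz)}^\circ$.

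Once these hypotheses are verified, the results of \cite{ar,w0} will yield that $\FF$ is directionally differentiable at $\bomega$ in the direction $\dom$ and that $\dz=\FF'(\bomega;\dom)$ is the unique solution of the variational inequality on the critical cone $\CC(\bz,\bzeta)=\overline{\R_+(U-\bz)}\cap[\bzeta]^\perp$, namely $\dz\in\CC(\bz,\bzeta)$ with $\dual{A'(\bz;\dz)}{\beeta-\dz}\ge\dual{\dom}{\beeta-\dz}$ for all $\beeta\in\CC(\bz,\bzeta)$. Substituting $A'(\bz;\dz)=\WW\dz+P'(\bz;\dz)$ and rewriting the $\WW$-term by means of \eqref{wa} as $\dual{\WW\dz}{\beeta-\dz}=(\BB\bvarepsilon(\dz),\bvarepsilon(\beeta)-\bvarepsilon(\dz))_Q$ converts this inequality into exactly \eqref{eq:f_derivv}. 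To reconfirm uniqueness directly, I would note that $\CC(\bz,\bzeta)$ is a closed convex cone, $\WW$ is coercive and $P'(\bz;\cdot)$ inherits monotonicity from $P$, so \eqref{eq:f_derivv} is itself a strongly monotone variational inequality and is uniquely solvable.

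The main obstacle is the content hidden inside the cited abstract theorems: the identification of the weak limit of the difference quotients $\dz_\tau:=\tau^{-1}(\FF(\bomega+\tau\dom)-\bz)$ as the solution of \eqref{eq:f_derivv}. The Lipschitz estimate \eqref{Fl} keeps $\{\dz_\tau\}$ bounded in $V$, so a weakly convergent subsequence can be extracted whose limit automatically belongs to the tangent cone $\overline{\R_+(U-\bz)}$; the delicate points are then (a) establishing the complementarity relation $\dz\in[\bzeta]^\perp$, and (b) passing to the limit in the perturbed inequalities to recover \eqref{eq:f_derivv} while upgrading the convergence to the strong topology. Both steps hinge on the polyhedricity \eqref{Up}, which allows arbitrary elements of $\CC(\bz,\bzeta)$ to be approximated from within $\R_+(U-\bz)\cap[\bzeta]^\perp$ and hence admissible test directions to be built; this is precisely the mechanism through which \eqref{Up} enters, and the reason \cite{ar,w0} apply in the present setting.
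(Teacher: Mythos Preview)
Your proposal is correct and follows essentially the same route as the paper: verify that $A=\WW+P$ is strongly monotone, Lipschitz, and (Hadamard) directionally differentiable, that $U$ is polyhedric by \eqref{Up}, and then invoke the abstract sensitivity results of \cite{w0} (the paper cites specifically \cite[Prop.~11, Thm.~12]{w0}) before unpacking $A'(\bz;\dz)=\WW\dz+P'(\bz;\dz)$ via \eqref{wa}. Your additional checks (that $\bzeta\in\overline{\R^+(U-\bz)}^\circ$, the direct uniqueness argument, and the sketch of the limiting procedure inside the abstract theorem) go beyond what the paper writes out but are consistent with it.
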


\medskip

\begin{proof}Since the operator $\WW+P:V \to V^*$ is directionally differentiable, cf.\,\eqref{Pd}, and since
the set $U\subset V$ is polyhedric, by assumption \eqref{Up}, we can apply \cite[Prop.\,11, Thm.\,12]{w0}.
This tells us that the operator
	$\FF:V^* \to V$ is directionally differentiable  and the element $\dz:=\FF'(\bomega;\dom)$ is the unique solution to the variational inequality
	\begin{eqnarray}
	&&\dz \in  {\CC(\bz,\bzeta)},\ \ \dual{(\WW+P)'(\bz;\dz)}{\beeta-\dz}_V \geq \dual{\delta \bomega}{\beeta-\dz}_Y  \quad \forall\, \beeta \in  {\CC(\bz,\bzeta)}.\nonumber
\end{eqnarray}
We now use the definition of the operator $\WW$ from \eqref{ww} to conclude the proof.
\end{proof}

We  proceed with the following existence and uniqueness result.

\begin{Lemma}\label{ex}
	Let $\bh, \df \in C([0,T];V^*)$ be given. Then there exists a unique function $\dbu \in L^\infty(0,T;V)$
	such that
	\begin{equation}\label{dd} \dbu(t)
		=\FF'\Big[\bh(t); \df(t)- \bvarepsilon^*\Big(\int_0^t \RR(t-s) \bvarepsilon (\dbu(s))\,ds\Big)\Big] \quad \mbox{\rm   a.e. }\  t\in (0,T). \end{equation}
	
\end{Lemma}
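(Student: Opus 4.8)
The plan is to set up equation \eqref{dd} as a fixed point problem for an operator acting on $L^\infty(0,T;V)$ and apply a Banach-type fixed point argument adapted to history-dependent operators. First I would define, for a given $\bh,\df \in C([0,T];V^*)$, the operator $\Theta:L^\infty(0,T;V)\to L^\infty(0,T;V)$ by
\[
(\Theta \bv)(t)=\FF'\Big[\bh(t);\ \df(t)-\bvarepsilon^*\Big(\int_0^t \RR(t-s)\bvarepsilon(\bv(s))\,ds\Big)\Big]\quad\text{a.e. }t\in(0,T),
\]
so that a function $\dbu$ solves \eqref{dd} precisely when it is a fixed point of $\Theta$. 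The main technical inputs are Lemma \ref{lem:dd}, which guarantees that the directional derivative $\FF'(\bomega;\cdot)$ exists and is the unique solution of the variational inequality \eqref{eq:f_derivv}, and a Lipschitz estimate for the map $\dom\mapsto\FF'(\bomega;\dom)$ with respect to the second argument.

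The key step is to establish that $\FF'(\bomega;\cdot):V^*\to V$ is Lipschitz continuous with constant $1/m_{\BB}$, uniformly in $\bomega$. I would obtain this directly from the variational inequality \eqref{eq:f_derivv}: given two directions $\dom_1,\dom_2$ with corresponding solutions $\dz_1,\dz_2\in\CC(\bz,\bzeta)$, I would test the inequality for $\dz_1$ with $\beeta=\dz_2$ and the one for $\dz_2$ with $\beeta=\dz_1$, add the two, and use the coercivity \eqref{coerc} of $\WW$ together with the monotonicity of the directional derivative $P'(\bz;\cdot)$ (which follows from the monotonicity of $P$ in \eqref{P}). This yields
\[
m_{\BB}\|\dz_1-\dz_2\|_V^2\le \dual{\dom_1-\dom_2}{\dz_1-\dz_2}\le \|\dom_1-\dom_2\|_{V^*}\|\dz_1-\dz_2\|_V,
\]
hence $\|\FF'(\bomega;\dom_1)-\FF'(\bomega;\dom_2)\|_V\le \frac{1}{m_{\BB}}\|\dom_1-\dom_2\|_{V^*}$. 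Combined with \eqref{esn} and \eqref{pmp}, this gives for $\bv,\bw\in L^\infty(0,T;V)$ the pointwise bound
\[
\|(\Theta\bv)(t)-(\Theta\bw)(t)\|_V\le \frac{d}{m_{\BB}}\,\|\RR\|_{C([0,T];\mathbf{Q_\infty})}\int_0^t\|\bv(s)-\bw(s)\|_V\,ds,
\]
so $\Theta$ is a history-dependent operator on $L^\infty(0,T;V)$ (or $C([0,T];V)$) in the sense recalled in Section \ref{s2}.

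Once $\Theta$ is shown to be history-dependent, existence and uniqueness of a fixed point follow from the standard fixed point property of such operators (see \cite{SM2,SofMig}), which asserts that a history-dependent operator on $C([0,T];X)$ — or on $L^\infty(0,T;X)$ — has a unique fixed point because iterating the Lipschitz-in-integral estimate produces contraction powers with factorially decaying constants $(cT)^n/n!$. I would invoke this result to conclude that $\Theta$ has a unique fixed point $\dbu$, which is the desired unique solution of \eqref{dd}. The main obstacle I anticipate is the measurability and essential-boundedness of $t\mapsto(\Theta\bv)(t)$: since $\FF'$ is only a directional derivative and the set $\CC(\bz(t),\bzeta(t))$ varies with $t$ through $\bz(t)=\FF(\bh(t))$, one must verify that the solution map of \eqref{eq:f_derivv} depends measurably on $t$ and that the resulting function lies in $L^\infty(0,T;V)$ rather than merely being defined pointwise. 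I expect this to be handled by the uniform Lipschitz bound just derived, which bounds $\|(\Theta\bv)(t)\|_V$ uniformly in $t$, together with measurability of $t\mapsto\bh(t)$ and $t\mapsto\df(t)$; establishing the measurability rigorously (rather than the contraction estimate, which is routine) is where the care is genuinely needed.
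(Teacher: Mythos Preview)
Your proposal is correct and follows the same overall strategy as the paper: both recast \eqref{dd} as a fixed-point equation for an operator built from $\FF'(\bh(t);\cdot)$ composed with the Volterra-type term, and both exploit the Lipschitz continuity of $\FF'(\bomega;\cdot)$ with constant $1/m_{\BB}$ together with the bound \eqref{pmp} to obtain the same pointwise history-dependent estimate.

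There are two genuine, if minor, differences worth noting. First, the paper obtains the Lipschitz bound for $\FF'(\bomega;\cdot)$ simply by observing that a Lipschitz operator passes its constant to its directional derivative in the second variable; your route through the variational inequality \eqref{eq:f_derivv} and the monotonicity of $P'(\bz;\cdot)$ is correct but more laborious. Second, and more substantively, to close the fixed-point argument without a smallness assumption on $T$ the paper uses an explicit \emph{concatenation} (time-stepping) argument: it fixes $T^*$ small enough that the operator is a contraction on $L^\infty(0,T^*;V)$, solves successively on subintervals $((k-1)T^*,kT^*)$ feeding the earlier solutions into the integral term, and then patches the pieces together. You instead invoke the abstract fixed-point property of history-dependent operators (iterates have constants $(cT)^k/k!$, so some power is a contraction). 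Both are standard and lead to the same conclusion; your route is more concise, while the paper's concatenation keeps the argument self-contained and avoids appealing to the black-box result on $L^\infty$ rather than $C$. The measurability concern you flag is legitimate, and the paper does not address it explicitly either; it follows from the fact that $\FF'(\bh(t);g(t))$ is a pointwise limit of difference quotients of the continuous map $\FF$, hence measurable in $t$ whenever $\bh$ and $g$ are.
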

\

\begin{proof}	Since $\FF:V\to V^*$ is a Lipschitz continuous operator, we have  that  $\FF'(\bh(t);\cdot):V^* \to V$ is Lipschitz continuous too, with the same Lipschitz constant, denoted by $L_{\FF}:=\frac{1}{m_\BB},$ see \eqref{Fl}. We now use assumption \eqref{R} to see that  the mapping
	\[L^1(0,T;V)\ni \beeta \mapsto \bvarepsilon^*\Big(\int_0^\cdot \RR(\cdot-s) \bvarepsilon (\beeta(s)))\,ds\Big)  \in C([0,T];V^*)\]
	is well-defined. It follows from here that the operator
	\[\beeta \mapsto \GG(\beeta):=\FF'\Big[\bh(\cdot); \df(\cdot)- \bvarepsilon^*\Big(\int_0^\cdot \RR(\cdot-s) \bvarepsilon(\beeta(s))\,ds\Big)\Big]\]
	maps $L^1(0,T;V)$ to $L^\infty(0,T;V)$.

	 Next, we remark that, for any elements  $\beeta_1,\beeta_2\in L^\infty(0,T;V)$,
	 the following inequality holds:
	 \begin{eqnarray*}
		&&\|\GG(\beeta_1)(t)-\GG(\beeta_2)(t)\|_{V}\leq \frac{1}{m_\BB}\Big\|\bvarepsilon^*\Big(\int_0^t \RR(t-s) \bvarepsilon ((\beeta_1-\beeta_2)(s))\,ds\Big)\Big\|_{V^*}
		\\
		&&\qquad\leq \frac{d}{m_\BB}\|\RR\|_{C([0,T];{\bf Q_\infty)}} T \|\beeta_1-\beeta_2\|_{L^\infty(0,T;V)}.
	\end{eqnarray*}	
This means that if $ \frac{d}{m_\BB}\|\RR\|_{C([0,T];{\bf Q_\infty)}} T<1$, then the
operator $\GG:L^\infty(0,T;V) \to L^\infty(0,T;V)$ is a contraction and, using the Banach fixed point argument, we deduce  that the nonlinear equation \eqref{dd} admits a unique solution in $L^\infty(0,T;V)$.

Nevertheless,  since $T>0$ does not necessarily satisfy the above smallness condition, we need to resort to a concatenation argument to  prove the unique solvability of \eqref{dd} on the whole interval $(0,T)$. To this end, we define $T^*:=\frac{m_\BB}{2d\|\RR\|_{C([0,T];{\bf Q_\infty)}}}$  and note that
\begin{equation}\label{tstar} \frac{d}{m_\BB}\|\RR\|_{C([0,T];{\bf Q_\infty)}} T^*<1.
\end{equation}
Then, it follows from above that equation
\[\beeta=\GG_1(\beeta) \quad \text{in }(0,T^*)\]
has a unique solution, denoted in what follows by $\dbu^1$. Note that here we use the notation
$\GG_1$ for the restriction of the operator $\GG$ to the space  $L^\infty(0,T^*;V)$, that is $\GG_1:=\GG|_{L^\infty(0,T^*;V)}$.

In the next step, we consider the mapping $\GG_2:L^\infty(T^*,2T^*;V)\to L^\infty(T^*,2T^*;V)$ given by
	\small
	\[\GG_2(\beeta)(t):=\FF'\Big[\bh(t); \df(t)- \bvarepsilon^*\Big(\int_0^{T^*} \RR(t-s) \bvarepsilon(\dbu^1(s))\,ds\Big)-\bvarepsilon^*\Big(\int_{T^*}^t \RR(t-s) \bvarepsilon(\beeta(s))\,ds \Big)\Big] \]\normalsize
	for all $t \in (T^*,2T^*)$.  Then, arguments similar to those used above show that the operator $\GG_2$ is well defined and, for any  $\beeta_1,\ \beeta_2\in L^\infty(T^*,2T^*;V)$,  the following inequality holds:
\begin{eqnarray*}
	&&\|\GG_2(\beeta_1)(t)-\GG_2(\beeta_2)(t)\|_{V}\leq \frac{1}{m_\BB}\Big\|\bvarepsilon^*\Big(\int_{T^*}^t \RR(t-s) \bvarepsilon ((\beeta_1-\beeta_2)(s))\,ds\Big)\Big\|_{V^*}
		\\[2mm]
	&&\qquad\leq \frac{d}{m_\BB}\|\RR\|_{C([0,T];{\bf Q_\infty)}} T^* \|\beeta_1-\beeta_2\|_{L^\infty(T^*,2T^*;V)}.
	\end{eqnarray*}
Then, using \eqref{tstar} it follows that the mapping  $\GG_2:L^\infty(T^*,2T^*;V)\to L^\infty(T^*,2T^*;V)$ is a contraction. Hence,  using again the Banach fixed point principle, it follows that the equation
\[\beeta=\GG_2(\beeta) \quad \text{in }(T^*,2T^*)\]
has  a unique solution in $L^\infty(T^*,2T^*;V)$. Denote this solution by  $\dbu^2$.

The procedure can be continued in this manner until, after a finite number of steps $K$, we have $K T^* >T.$ We set $K:=[T/T^*]+1$ and, for simplicity, we do not make the difference between $K T^*$ and $T$. We denote by $\dbu^k$, $k=1,..,K,$ the unique solution to
	\[\beeta=\GG_k(\beeta) \quad \text{in }((k-1)T^*,kT^*),\]
	where
  \small
	\begin{eqnarray*}
		&&\hspace{-5mm}\GG_k(\beeta)(t)\\ [2mm]
		&&\hspace{-5mm}:=\FF'\Big[\bh(t); \df(t)- \sum_{i=1}^{k-1} \bvarepsilon^*\Big(\int_{(i-1)T^*}^{iT^*} \RR(t-s) \bvarepsilon (\dbu^i(s))\,ds\Big)-\bvarepsilon^*\Big(\int_{(k-1)T^*}^t \RR(t-s) \bvarepsilon (\beeta(s))\,ds\Big) \Big]
	\end{eqnarray*}\normalsize
	for all $t \in ((k-1)T^*,kT^*).$
	
	We now prove that the solution that solves \eqref{dd} can be obtained by concatenating $\dbu^k,$ $k=1,..,K$. To this end, we define
	\begin{equation}\label{duf}\dbu_{fin}(t):=\sum_{k=1}^{K} \chi_{((k-1)T^*,kT^*)}  \dbu^k(t) \quad\  \forall\, t \in [0,T]\end{equation}
	where, here and below, we use the notation $\chi_A$ for the characteristic function of the set $A\subset \real$, that is
	\begin{equation*}\label{}
		\chi_A(t)=\left\{\begin{array}{ll} 0\qquad {\rm if}\ \ t\in A\\ [2mm]
		1\qquad{\rm otherwise}.
	\end{array}\right.
	\end{equation*}
	Note that, since $\dbu^k\in L^\infty((k-1)T^*,kT^*;V)$ for $k=1,..,K$, we obtain that $\dbu_{fin}\in L^\infty(0,T;V)$.
	Next, we consider an arbitrary element  $t \in (0,T)$ and note that there exists $k$ so that $t \in ((k-1)T^*,kT^*)$. Then,
	
	\small{\begin{align*}&\dbu_{fin}(t)
			\\&\overset{\eqref{duf}}{=} \dbu^k(t)
			\\&=\GG_k(\dbu^k)(t)
			\\&=\FF'\Big[\bh(t); \df(t)- \sum_{i=1}^{k-1} \bvarepsilon^*\Big(\int_{(i-1)T^*}^{iT^*} \RR(t-s) \bvarepsilon (\dbu^i(s))\,ds\Big)-\bvarepsilon^*\Big(\int_{(k-1)T^*}^t \RR(t-s) \bvarepsilon (\dbu^k(s))\Big)\,ds \Big]
			\\&\overset{\eqref{duf}}{=}\FF'\Big[\bh(t); \df(t)- \sum_{i=1}^{k-1} \bvarepsilon^*\Big(\int_{(i-1)T^*}^{iT^*} \RR(t-s) \bvarepsilon (\dbu_{fin}(s))\,ds\Big)-\bvarepsilon^*\Big(\int_{(k-1)T^*}^t \RR(t-s) \bvarepsilon (\dbu_{fin}(s))\,ds \Big)\Big]
			\\&=\FF'\Big[\bh(t); \df(t)- \bvarepsilon^*\Big(\int_{0}^t \RR(t-s) \bvarepsilon (\dbu_{fin}(s))\,ds \Big)\Big]
			\\&=\GG(\dbu_{fin})(t).
	\end{align*}}\normalsize
We conclude from above that
\begin{equation*}
	\dbu_{fin}=\GG(\dbu_{fin}) \quad \text{in }(0,T),
\end{equation*}
which proves that  the function $\dbu_{fin} \in L^\infty(0,T;V)$ is a solution for \eqref{dd}.
	The uniqueness of  the solution $\dbu_{fin}$ follows by using the
	Lipschitz continuity of the $\FF'(\bh(t);\cdot)$, the assumption \eqref{R} on $\cR$ and   Gronwall's inequality.
\end{proof}

\begin{Remark}\label{rem}
Note that the function $\dbu $ is not expected to be continuous in time. Indeed, even if $\bh,\df$ are very smooth in time,  the smoothness of the right hand side in \eqref{dd} does not get improved, as $\FF'(\cdot;\cdot)$ is not continuous with respect to the point of derivation, i.e., at $\bv \in V^*$, the mapping $\FF'(\cdot;\bv)$ is not necessarily continuous.
\end{Remark}

We are now in a position to state and prove the main result of this section.
Note that, here and below, for each $t\in[0,T]$, the following notations are used:
 \begin{eqnarray*}
 &&\bzeta(t):= \fb(t)-\bvarepsilon^*\Big(\int_0^t \RR(t-s) \bvarepsilon \bu(s)\,ds\Big)-(\WW+P)(\bu(t)),\\ [2mm]
 &&\CC(\bu(t),\bzeta(t)):=\overline{\R_+(U-\bu(t))} \cap [\bzeta(t)]^{\perp},\nonumber
\end{eqnarray*}where $\bu=S(\fb).$

\begin{Theorem}\label{t3} Assume \eqref{U}--\eqref{f} and \eqref{Up}--\eqref{Pd}. Then,
	the solution operator \eqref{so}  is Hadamard directionally differentiable from $C([0,T];V^*) $ to $L^\varrho(0,T;V)$ for each $\varrho \in (1,\infty)$.
	Moreover, its directional derivative at $ \fb$ in direction $ \df$, denoted by $\dbu:=S'(\fb;\df) \in L^\infty(0,T;V)$, is characterized as the unique solution of the history-dependent variational inequality
	\begin{eqnarray}
	&&\label{si_d}
		\dbu(t) \in {\CC(\bu(t),\bzeta(t))}, \quad (\BB \bvarepsilon (\dbu(t)), \bvarepsilon(\bv)- \bvarepsilon(\dbu(t)))_Q
			\\ [2mm]
	&&\quad+\dual{P'(\bu(t);\dbu(t))}{\bv-\dbu(t)}+\Big(\int_0^t \RR(t-s) \bvarepsilon (\dbu(s))\,ds, \bvarepsilon(\bv)- \bvarepsilon (\dbu(t))\Big)_Q
		\nonumber	\\ [2mm]
	&&\qquad  \geq \dual{ \df(t)}{\bv-\dbu(t)}  \quad\  \forall\, \bv \in {\CC(\bu(t),\bzeta(t))}, \ \mbox{\rm  a.e. }\  t\in (0,T).\nonumber
	\end{eqnarray}
\end{Theorem}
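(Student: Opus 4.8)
The plan is to prove the two assertions of Theorem~\ref{t3} in tandem, exploiting the fixed point structure of Problem~$\cQ$ and the directional differentiability of $\FF$ already established in Lemma~\ref{lem:dd}. First I would fix $\fb\in C([0,T];V^*)$ and a direction $\df\in C([0,T];V^*)$, and for $\tau>0$ set $\bu^\tau:=S(\fb+\tau\df)$ and $\bu:=S(\fb)$. Writing the difference quotient $\dbu^\tau:=\tau^{-1}(\bu^\tau-\bu)$, the aim is to show that $\dbu^\tau$ converges, as $\tau\searrow 0$, to the unique solution $\dbu$ of \eqref{dd} furnished by Lemma~\ref{ex} with $\bh=\bu$, and then to identify this solution of \eqref{dd} with the solution of the variational inequality \eqref{si_d}. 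That last identification is a direct consequence of Lemma~\ref{lem:dd}: plugging $\bomega=\bh(t)=\bu(t)$ and $\dom=\df(t)-\bvarepsilon^*(\int_0^t\RR(t-s)\bvarepsilon(\dbu(s))\,ds)$ into \eqref{eq:f_derivv}, and recalling that $\bz=\FF(\bomega)=\bu(t)$ and $\bzeta=\bomega-(\WW+P)(\bz)$ reproduces exactly $\bzeta(t)$ and $\CC(\bu(t),\bzeta(t))$, turns the pointwise fixed point identity \eqref{dd} into the variational inequality \eqref{si_d}. So the core analytic work is the convergence $\dbu^\tau\to\dbu$.

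For the convergence, I would use the fixed point equation \eqref{fp} satisfied by $\bu^\tau$ and $\bu$. Subtracting and dividing by $\tau$,
\begin{equation*}
\dbu^\tau(t)=\frac{1}{\tau}\Big(\FF[\bomega^\tau(t)]-\FF[\bomega(t)]\Big),
\end{equation*}
where $\bomega(t)=\fb(t)-\bvarepsilon^*(\int_0^t\RR(t-s)\bvarepsilon(\bu(s))\,ds)$ and $\bomega^\tau(t)$ is the analogous expression built from $\fb+\tau\df$ and $\bu^\tau$. The key is that $\tau^{-1}(\bomega^\tau(t)-\bomega(t))=\df(t)-\bvarepsilon^*(\int_0^t\RR(t-s)\bvarepsilon(\dbu^\tau(s))\,ds)=:\dom^\tau(t)$ converges, thanks to Lemma~\ref{l2} giving $\dbu^\tau$ bounded in $C([0,T];V)$ and hence the integral term controlled via \eqref{esn}, \eqref{pmp}. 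Since $\FF$ is Lipschitz (see \eqref{Fl}) and directionally differentiable (Lemma~\ref{lem:dd}), it is Hadamard directionally differentiable by Remark~\ref{rem:H}, so for sequences $\tau_n\searrow 0$ with $\dom^{\tau_n}(t)\to\dom(t)$ one gets the pointwise limit $\FF'[\bomega(t);\dom(t)]$. The plan is thus to pass to the limit in the fixed point relation, recovering \eqref{dd} for the limit of $\dbu^\tau$, and to invoke the uniqueness from Lemma~\ref{ex} to conclude that the whole net $\dbu^\tau$ converges.

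The main obstacle is reconciling the pointwise-in-$t$ Hadamard differentiability of $\FF$ with a convergence statement in the Bochner space $L^\varrho(0,T;V)$, particularly because, as Remark~\ref{rem} warns, $\dbu$ need not be continuous in time. My strategy would be to first secure a uniform bound $\|\dbu^\tau\|_{L^\infty(0,T;V)}\le C$ (from Lemma~\ref{l2}), which gives weak-$*$ compactness; then use the history-dependent (Volterra) structure together with a Gronwall argument to upgrade pointwise convergence to strong convergence in $L^\varrho$. Concretely, I would estimate $\|\dbu^\tau(t)-\dbu(t)\|_V$ by splitting into the difference-quotient-of-$\FF$ term, which tends to zero pointwise and is dominated, and the memory term $\frac{d}{m_\BB}\|\RR\|_{C([0,T];\mathbf{Q_\infty})}\int_0^t\|\dbu^\tau(s)-\dbu(s)\|_V\,ds$; Gronwall then propagates the pointwise convergence, and the uniform $L^\infty$ bound lets the dominated convergence theorem deliver strong $L^\varrho(0,T;V)$ convergence for every $\varrho\in(1,\infty)$. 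Finally, combining this convergence with the Lipschitz continuity of $S$ (Lemma~\ref{l2}) and Remark~\ref{rem:H} yields the Hadamard directional differentiability of $S$ from $C([0,T];V^*)$ to $L^\varrho(0,T;V)$, completing the proof.
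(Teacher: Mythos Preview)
Your proposal is correct and follows essentially the same route as the paper: both exploit the fixed point formulation $\cQ$, use Lemma~\ref{lem:dd} to identify \eqref{dd} with \eqref{si_d}, construct the candidate $\dbu$ via Lemma~\ref{ex}, split $\dbu^\tau-\dbu$ into a difference-quotient remainder for $\FF$ plus a Volterra memory term $\frac{d}{m_\BB}\|\RR\|_{C([0,T];{\bf Q_\infty})}\int_0^t\|\dbu^\tau(s)-\dbu(s)\|_V\,ds$, close with Gronwall and dominated convergence in $L^\varrho$, and finally upgrade to Hadamard differentiability via Lemma~\ref{l2} and Remark~\ref{rem:H}. One minor slip to fix: you write ``Lemma~\ref{ex} with $\bh=\bu$'' and later ``$\bomega=\bh(t)=\bu(t)$'', but $\bh(t)$ lives in $V^*$ and must be $\fb(t)-\bvarepsilon^*\big(\int_0^t\RR(t-s)\bvarepsilon(\bu(s))\,ds\big)$ (exactly your $\bomega(t)$), while $\bu(t)=\FF(\bh(t))$ is the image; this is only a labeling inconsistency and does not affect the argument, and the weak-$*$ compactness step you mention is unnecessary since you already build $\dbu$ directly via Lemma~\ref{ex}.
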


\begin{proof}
We only focus on proving the directional differentiability result. The Hadamard directional differentiability then follows immediately from the Lipschitz continuity of $S$   established in Lemma \ref{l2} and Remark \ref{rem:H}.
 By virtue of Lemma \ref{lem:dd}, the variational inequality \eqref{si_d} is equivalent to
\begin{equation}\label{dd0} \dbu(t)
		=\FF'\Big[\bh(t);\underbrace{ \df(t)- \bvarepsilon^*\Big(\int_0^t \RR(t-s) \bvarepsilon (\dbu(s))\,ds\Big)}_{=:\dbh(t)}\Big] \quad \mbox{ a.e. }t \in (0,T),
	\end{equation}
where, for simplicity, here and below, we use the notation
\[\bh(t):= \fb(t)-\bvarepsilon^*\Big(\int_0^t \RR(t-s) \bvarepsilon(\bu(s))\,ds\Big)\quad\forall\, t\in[0,T].\]
Note that,
according to Lemma \ref{ex}, equation \eqref{dd0} admits a unique solution $\dbu \in L^\infty(0,T;V)$.
Let $\tau>0$ be arbitrary, but fixed and denote $\bu_\tau:=S( \fb+\tau  \df)$.
Our aim in what follows is to prove that
	\begin{equation}\label{rez}
			\Big\| \frac{\bu_\tau-\bu}{\tau}-\dbu \Big\|_{L^\varrho(0,T;V)}
			\to 0\quad \ \text{as }\tau \searrow 0.
	\end{equation}
To this end, we fix $t\in[0,T]$ and use   Theorem \ref{t1n} to write  \begin{eqnarray*}
&&\bu(t)=\FF\underbrace{\Big[ \fb(t)-\eps^*\Big(\int_0^t  \RR(t-s)\eps(\bu(s))\,ds\Big)\Big]}_{=\bh(t)} \quad \text{ in }V^*,
			\\[2mm]
&&\bu_\tau(t)=\FF\underbrace{\Big[ \fb(t)+\tau  \df(t)- \eps^*\Big(\int_0^t \RR(t-s) \bvarepsilon (\bu_\tau(s))\,ds\Big)\Big]}_{=:\bh_\tau(t)} \quad \text{ in }V^*.
\end{eqnarray*}
\normalsize Thus,
\begin{eqnarray}\label{w}
&&\Big(\frac{\bu_\tau-\bu}{\tau}-\dbu \Big)(t)=\frac{\FF(\bh_\tau(t)) -\FF(\bh(t)+\tau \dbh(t))}{\tau}\\[2mm]
&&+\underbrace{\Big[\frac{\FF(\bh(t)+\tau \dbh(t))-\FF(\bh(t))}{\tau}-\FF'(\bh(t);\dbh(t))\Big]}_{=:r_{\FF,\tau}(t)}.\nonumber
\end{eqnarray}

Note that, due to the directional differentiability of $\FF:V^* \to V$ established in Lemma \ref{lem:dd}, we have
\begin{equation}\label{Z1}
r_{\FF,\tau}(t) \to 0 \  \text{ in } V \quad\  \text{as }\tau \searrow 0 .
\end{equation}
Moreover, as $\FF$ is Lipschitz continuous, it follows that
\begin{equation}\label{Z2}
	\|r_{\FF,\tau}(t)\|_V \le 2L_\FF\|\dbh(t)\|_{V^*} .
\end{equation}
The properties \eqref{Z1} and \eqref{Z2} allow us to apply Lebesgue's dominated convergence theorem in order to see that
	\begin{equation}
		\label{rgt0}r_{\FF,\tau} \to 0 \ \text{ in } L^{\varrho}(0,T;V) \quad \text{as }\tau \searrow 0,\quad \forall\,\varrho \in[1,\infty).
	\end{equation}

Next, using the Lipschitz continuity  of $\FF$, see \eqref{Fl},  and identity \eqref{w}, we find that
\begin{eqnarray}\label{above}
	&&\Big\| \Big(\frac{\bu_\tau-\bu}{\tau}-\dbu \Big)(t)\Big\|_V\\ [2mm]
		&&\qquad \leq \frac{1}{m_\BB}
			\Big\|\frac{\bh_\tau(t) -\bh(t)}{\tau}-\dbh(t)\Big\|_{V^*}+\|r_{\FF,\tau}(t)\|_{V}. \nonumber
\end{eqnarray}
	Note that
	\begin{equation*}
			\frac{\bh_\tau(t) -\bh(t)}{\tau}-\dbh(t)=-  \bvarepsilon^* \int_0^t  \RR(t-s) \bvarepsilon \Big(\frac{\bu_\tau(s)-\bu(s)}{\tau}-\dbu (s)\Big)\,ds
	\end{equation*}
	and, therefore,
	\begin{equation}\label{abov}
			\Big\| \frac{\bh_\tau(t) -\bh(t)}{\tau}-\dbh(t)\Big\|_{V^*}\leq d \|\RR\|_{C([0,T];{\bf Q_\infty)}} \int_0^t \Big\|\frac{\bu_\tau(s)-\bu(s)}{\tau}-\dbu(s)\Big\|_V\,ds.
\end{equation}
	We now combine the inequalities  \eqref{above}  and
	\eqref{abov} to deduce that
	\begin{equation*}
			\Big\| \Big(\frac{\bu_\tau-\bu}{\tau}-\dbu \Big)(t)\Big\|_V \leq
			\frac{d}{m_\BB} \|\RR\|_{C([0,T];{\bf Q_\infty)}} \int_0^t \Big\|\frac{\bu_\tau(s)-\bu(s)}{\tau}-\dbu(s)\Big\|_V\,ds+\| r_{\FF,\tau}(t)\|_{V} .
	\end{equation*}
Then,  applying Gronwall's inequality we find that
	\begin{eqnarray*}
			&&\Big\| \Big(\frac{\bu_\tau-\bu}{\tau}-\dbu \Big)(t)\Big\|_V\leq  \| r_{\FF,\tau}(t)\|_{V} +c\int_0^t e^{c(t-s)}\| r_{\FF,\tau}(s)\|_{V}\,ds
			\\ [2mm]
			&&\qquad \leq   \| r_{\FF,\tau}(t)\|_{V} +c\Big(\int_0^t e^{c(t-s)\varrho'}\,ds \Big)^{1/\varrho'} \| r_{\FF,\tau}\|_{L^\varrho(0,T;V)}
			\\[2mm]
			&&\qquad\qquad \leq \| r_{\FF,\tau}(t)\|_{V} +c\, \Big(\frac{ e^{c\varrho't}-1}{c\varrho'}\Big)^{1/\varrho'} \| r_{\FF,\tau}\|_{L^\varrho(0,T;V)} \quad  \forall\,t \in [0,T],
	\end{eqnarray*}
where $c:=\frac{d}{m_\BB} \|\RR\|_{C([0,T];{\bf Q_\infty)}}$ and $\varrho'$ is the conjugate exponent to $\varrho$, i.e.,
	\[1/\varrho+1/\varrho'=1.\]
	
We conclude from above that
\begin{eqnarray*}
			&&\Big\| \Big(\frac{\bu_\tau-\bu}{\tau}-\dbu \Big)\Big\|_{L^\varrho(0,T;V)}
			\\ [2mm]
			&&\qquad\leq  \| r_{\FF,\tau}\|_{L^\varrho(0,T;V)}+c T^{1/\varrho} \Big(\frac{ e^{c\varrho'T}-1}{c\varrho'}\Big)^{1/\varrho'} \| r_{\FF,\tau}\|_{L^\varrho(0,T;V)}\to 0\quad \text{as }\tau \searrow 0,
\end{eqnarray*}
and, using \eqref{rgt0} we deduce that the convergence \eqref{rez} holds.
Thus, we have proven that $S:C([0,T];V^*) \to L^\varrho(0,T;V)$ is directionally differentiable with derivative at $\fb$ in direction $\df$ given by $\dbu \in L^\infty(0,T;V)$. This is the unique solution to  equation \eqref{dd0} which, in fact, is equivalent to inequality \eqref{si_d}, as mentioned
at the beginning of the proof. We can now conclude the desired Hadamard directional differentiability, which completes the proof. \end{proof}

{We now go back to to the mechanical framework described at the end of Section \ref{s2}, in which $\fb$ is replaced by \eqref{cop}.
We claim that, in this particular case, Theorem \ref{t3} can be used to provide a sensitivity analysis result of $S$ with respect to the data $(\fb_0,\fb_2)$.  To this end, we note that the  operator $\KK:H^1(0,T; L^2(\Omega)^d \times  L^2(\Gamma_2)^d) \to C([0,T];V^*)$ from \eqref{comp} is linear and continuous and,
therefore, Fr\'echet-differentiable. Next, by chain rule combined with Theorem \ref{t3},
we have that the operator
$$(\fb_0,\fb_2)\mapsto S(\fb_0,\fb_2) $$is Hadamard directionally differentiable from $H^1(0,T; L^2(\Omega)^d \times  L^2(\Gamma_2)^d)$ to $L^\varrho(0,T;V)$ for each $\varrho \in (1,\infty)$, which concludes the proof of the claim.}

\section{Well-posedness results}\label{s5}
\setcounter{equation}0

Well-posedness concepts for nonlinear problems represent an important topic in Functional Analysis. This  has known a significant development in the last decades. Originating in the papers of  Tykhonov \cite{Ty} and Levitin-Polyak \cite{LP} (where the well-posedness of minimization problems was considered), well-posedness concepts have been extended to a large number of problems, including nonlinear equations, inequality problems, inclusions, fixed point problems, and optimal control problems. In particular, the well-posedness  of variational inequalities was studied for the first time in \cite{LP1,LP2}. Comprehensive references in the field are   the books \cite{DZ, L} and, more recently,
\cite{S}.

The well-posedness concepts depend on the problem considered, vary from author to author, and even from paper to paper. Nevertheless, most of these concepts are
based on two main ingredients: the existence and uniqueness of the solution to the corresponding problem and the convergence of a special class of sequences to it, the so-called approximating sequences.

In this section, we are working under assumptions \eqref{U}--\eqref{f}, even if we do not mention them explicitly. Then, using Theorems \ref{t1} and \ref{t1n}, it follows that Problems $\cP$ and $\cQ$ have a unique common solution, $\bu\in C([0,T];U)$.  For this reason, any well-posedness concept  we  consider will concern  simultaneously  Problems $\cP$ and $Q$, and will be determined by its set of approximating sequences. We start with the following definitions.

\begin{Definition}\label{dP} {\rm a)} A sequence $\{\bu_n\}\subset C([0,T];V)$ is said to be a $p$-approximating sequence  if
	$\bu_n(t)\in U$ for any $n\in\mathbb{N}$, $t\in[0,T]$, and
	if there exists  a sequence $0\le\ve_n\to 0$ such that
	\begin{eqnarray}
		&&\label{sin} \hspace{-9mm}(\BB\bvarepsilon({\bu}_n(t)),\bvarepsilon(\bv)-\bvarepsilon(\bu_n(t)))_Q+\Big(\int_0^t\RR(t-s)\bvarepsilon({\bu}_n(s))\,ds,\bvarepsilon(\bv)-\bvarepsilon(\bu_n(t))\Big)_Q
		\\[2mm]
		&&\hspace{-9mm}\qquad+\dual{P\bu_n(t)}{\bv-\bu_n(t)}+\ve_n\|\bv-\bu_n(t)\|_V\nonumber\\ [2mm]
		&&\hspace{-9mm}\qquad\qquad\geq
		\dual{ \fb(t)}{\bv-\bu_n(t)}\quad \forall\,\bv\in U,\, t\in[0,T],\, n\in\mathbb{N}.\nonumber
	\end{eqnarray}
	
\smallskip
	{\rm b)}
		Problems $\cP$ and $\cQ$ are said to be $p$-well-posed if  every $p$-approximating sequence converges in $C([0,T];V)$ to $\bu$.
	\end{Definition}

\begin{Definition}\label{dQ} {\rm a)} A sequence $\{\bu_n\}\subset C([0,T];V)$ is said to be a $q$-approximating sequence  if
	there exists  a sequence $0\le\ve_n\to 0$ such that
	\begin{equation}\label{fpn}
\Big\|	\bu_n(t)-\FF\Big[ \fb(t)- \eps^*\Big(\int_0^t  \RR(t-s)\eps(\bu_n(s))\,ds\Big)\Big]\Big\|_V\le\ve_n\quad \forall\, t\in[0,T],\, n\in\mathbb{N}.
	\end{equation}

\smallskip
	{\rm b)}
	Problems $\cP$ and $\cQ$ are said to be $q$-well-posed if  every $q$-approximating sequence converges in $C([0,T];V)$ to $\bu$.
\end{Definition}

 Our main result in this section is the following.

\begin{Theorem}\label{t4} Assume  \eqref{U}--\eqref{f}. Then, Problems $\cP$ and $\cQ$ are both $p$- and $q$-well-posed.
\end{Theorem}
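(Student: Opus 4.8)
The plan is to prove the two well-posedness properties in turn, establishing $q$-well-posedness first by a direct Gronwall estimate and then deducing $p$-well-posedness from it by showing that every $p$-approximating sequence is automatically a $q$-approximating sequence. Throughout, $\bu=S(\fb)\in C([0,T];U)$ denotes the common solution of $\cP$ and $\cQ$, which satisfies the fixed point identity \eqref{fp}.

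First I would treat $q$-well-posedness. Let $\{\bu_n\}$ be a $q$-approximating sequence with associated $0\le\ve_n\to0$ as in \eqref{fpn}. For $t\in[0,T]$ set $\bomega_n(t):=\fb(t)-\eps^*\Big(\int_0^t\RR(t-s)\eps(\bu_n(s))\,ds\Big)$ and let $\bomega(t)$ be defined analogously with $\bu$ in place of $\bu_n$, so that $\bu(t)=\FF\bomega(t)$ by \eqref{fp}. Splitting
\[
\|\bu_n(t)-\bu(t)\|_V\le\|\bu_n(t)-\FF\bomega_n(t)\|_V+\|\FF\bomega_n(t)-\FF\bomega(t)\|_V,
\]
the first term is bounded by $\ve_n$ thanks to \eqref{fpn}, while the second is controlled by the Lipschitz continuity \eqref{Fl} of $\FF$ together with \eqref{esn} and \eqref{pmp}, exactly as in the proof of Lemma \ref{l2}. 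This yields
\[
\|\bu_n(t)-\bu(t)\|_V\le\ve_n+c\int_0^t\|\bu_n(s)-\bu(s)\|_V\,ds,\qquad c:=\frac{d}{m_{\BB}}\|\RR\|_{C([0,T];{\bf Q_\infty})}.
\]
Gronwall's inequality then gives $\|\bu_n(t)-\bu(t)\|_V\le\ve_n e^{ct}$, whence $\|\bu_n-\bu\|_{C([0,T];V)}\le\ve_n e^{cT}\to0$, establishing $q$-well-posedness.

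Next I would prove $p$-well-posedness by reducing it to the previous step. Let $\{\bu_n\}$ satisfy \eqref{sin}, so that $\bu_n(t)\in U$ for all $n,t$. Using \eqref{wa} and \eqref{es}, rewrite \eqref{sin} in operator form as
\[
\dual{(\WW+P)\bu_n(t)-\bomega_n(t)}{\bv-\bu_n(t)}+\ve_n\|\bv-\bu_n(t)\|_V\ge0\qquad\forall\,\bv\in U,
\]
with $\bomega_n(t)$ as above, and put $\bz_n(t):=\FF\bomega_n(t)\in U$, which by \eqref{eq} satisfies $\dual{(\WW+P)\bz_n(t)-\bomega_n(t)}{\bv-\bz_n(t)}\ge0$ for all $\bv\in U$. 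Testing the first inequality with the admissible choice $\bv=\bz_n(t)$ and the second with $\bv=\bu_n(t)$ and adding, the terms involving $\bomega_n(t)$ cancel and I obtain
\[
\dual{(\WW+P)\bu_n(t)-(\WW+P)\bz_n(t)}{\bu_n(t)-\bz_n(t)}\le\ve_n\|\bu_n(t)-\bz_n(t)\|_V.
\]
The monotonicity of $P$ in \eqref{P} discards its contribution to the left-hand side, while the coercivity \eqref{coerc} of the linear operator $\WW$ bounds it below by $m_{\BB}\|\bu_n(t)-\bz_n(t)\|_V^2$. Hence $\|\bu_n(t)-\bz_n(t)\|_V\le\ve_n/m_{\BB}$ for all $t$ and $n$, which is precisely \eqref{fpn} with $\ve_n$ replaced by $\ve_n/m_{\BB}\to0$. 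Thus $\{\bu_n\}$ is a $q$-approximating sequence, and the already proven $q$-well-posedness yields $\bu_n\to\bu$ in $C([0,T];V)$, completing the argument.

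The bulk of the proof is routine; the one step requiring care is the combination of the perturbed and exact inequalities in the $p$-case, where the correct pairing of test functions ($\bz_n(t)$ in \eqref{sin} and $\bu_n(t)$ in \eqref{eq}) is what forces the $\bomega_n(t)$-terms to cancel and lets coercivity plus monotonicity deliver the clean bound $\ve_n/m_{\BB}$. I expect no genuine obstacle beyond verifying that $\bz_n(t)\in U$, so that it is indeed admissible as a test function in the perturbed inequality \eqref{sin}.
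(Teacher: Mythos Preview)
Your proof is correct, but it organizes the argument differently from the paper, and in fact more economically.

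For $q$-well-posedness, the paper does not use a Gronwall estimate. Instead it observes that the operator $\Lambda$ in \eqref{la} is history-dependent, deduces that its iterates $\Lambda^k$ have Lipschitz constants $L_k=\frac{L^kT^k}{k!}$, picks $p$ with $L_p<1$, and then uses a telescoping bound
\[
\|\bu_n-\bu\|_{C([0,T];V)}\le(1+L_1+\dots+L_{p-1})\|\bu_n-\Lambda\bu_n\|_{C([0,T];V)}+L_p\|\bu_n-\bu\|_{C([0,T];V)}
\]
to conclude. Your direct split $\|\bu_n(t)-\bu(t)\|_V\le\ve_n+\|\FF\bomega_n(t)-\FF\bomega(t)\|_V$ followed by the Lipschitz bound on $\FF$ and Gronwall is shorter and entirely avoids the iterated-contraction machinery.

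For $p$-well-posedness, the paper argues directly: it tests \eqref{si} with $\bu_n(t)$ and \eqref{sin} with $\bu(t)$, adds, and applies coercivity, monotonicity and Gronwall to compare $\bu_n$ with $\bu$ in one step. You instead compare $\bu_n(t)$ with $\bz_n(t)=\FF\bomega_n(t)$, obtaining $\|\bu_n(t)-\bz_n(t)\|_V\le\ve_n/m_\BB$, which shows directly that every $p$-approximating sequence is $q$-approximating, and then you invoke the $q$-result. The paper eventually records the inclusion $\cS_p\subset\cS_q$ as well (in Remark~\ref{r6}), but only \emph{a posteriori}, deducing it from the already-proven $p$-well-posedness via $\cS_p\subset\cS=\cS_q$; your argument establishes the inclusion independently and uses it as the engine of the proof. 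Both routes are valid; yours has the advantage of making the structural relation between the two well-posedness notions do the work, while the paper's keeps the two proofs self-contained.
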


\begin{proof} Let $\{\bu_n\}$ be a $p$-approximating sequence.
We fix $n\in\mathbb{N}$ and $t\in[0,T]$. We take $\bv=\bu_n(t)$ in \eqref{si}, $\bv=\bu(t)$ in \eqref{sin} and add the resulting inequalities to find that
\begin{eqnarray*}
	&&(\BB\bvarepsilon({\bu}_n(t))-\BB\bvarepsilon(
	\bu(t))),\bvarepsilon(\bu_n(t))-\bvarepsilon(\bu(t)))_Q\\ [2mm]
	&&\quad\le \Big(\int_0^t\RR(t-s)\big(\bvarepsilon({\bu}_n(s))-\bvarepsilon({\bu}(s))\big)
	\,ds,\bvarepsilon(\bu(t))-\bvarepsilon(\bu_n(t))\Big)_Q
	\\[2mm]
	&&\qquad+\dual{P\bu_n(t)-P\bu(t)}{\bu(t)-\bu_n(t)}+\ve_n\|\bu(t)-\bu_n(t)\|_V.
	\nonumber
\end{eqnarray*}
We now use assumptions \eqref{B} and \eqref{P} as well as  equality \eqref{684} to see that
\begin{eqnarray*}
&&m_{\cal B}\|\bu_n(t)-\bu(t)\|^2_V\\ [2mm]
&&\ \ \le\Big(\int_0^t
\|\RR(t-s)\big(\bvarepsilon({\bu}_n(s))-\bvarepsilon({\bu}_n(s))\big)\|_Q\,ds\Big)
\|\bu_n(t)-\bu(t)\|_V+\ve_n\|\bu(t)-\bu_n(t)\|_V.
\end{eqnarray*}
Next, by assumption \eqref{R} and inequality \eqref{pmp}  we
deduce that
\begin{equation*}
\|\bu_n(t)-\bu(t)\|_V\le c \int_0^t\|\bu_n(s)-\bu(s)\|_V\,ds+\frac{\ve_n}{m_{\cal B}}.
\end{equation*}
with $c=\frac{d}{m_{\mathcal B}}\, \|\mathcal R\|_{C([0,T];{\bf Q_\infty})}$. Finally, we use Gronwall's inequality to find that
\[\|\bu_n(t)-\bu(t)\|_V\le \frac{\ve_n}{m_{\cal B}\,}e^{ct}\le \frac{\ve_n}{m_{\cal B}}\,e^{cT}.   \]
Therefore,  the convergence $\ve_n\to 0$ implies that $\bu_n\to \bu$ in $C([0,T];V)$ which shows that Problems $\cP$ and $\cQ$ are $p$-well-posed.

\medskip

Assume now that $\{\bu_n\}$ is a $q$-approximating sequence.
We use inequality \eqref{fpn} and definition \eqref{la} of the operator $\Lambda$ to  see that
\begin{equation*}
\|\bu_n(t)-\Lambda\bu_n(t)\|_V\le\ve_n \quad\ \forall\, t\in [0,T],\, n\in\mathbb{N}
\end{equation*}
and, since $\ve_n\to 0$,  we find that
\begin{equation}\label{71}
\|\bu_n-\Lambda\bu_n\|_{C([0,T];V)}\to 0.
\end{equation}

On the other hand, arguments similar to those used in Lemma \ref{l2} show that $\Lambda$ is a history-dependent operator, that is, it satisfies the inequality
\begin{equation}\label{72}
	\|\Lambda \bv(t)-\Lambda\bw(t)\|_V\le L\int_0^t\
	\|\bv(s)-\bw(s)\|_V\,ds\qquad\forall\,\bv,\, \bw\in C([0,T];V),\ t\in[0,T]
\end{equation}
with some constant $L>0$. Denote by $\Lambda^k$ the powers of the operator $\Lambda$, for $k=1,2,\ldots$.
Then,  using inequality \eqref{72}, after some algebraic calculations,  we deduce that the operator $\Lambda^k:C([0,T];V)\to C([0,T];V)$ is a Lipschitz continuous operator with constant $L_k=\frac{L^kT^{k}}{k!}$, that is,
\begin{equation}\label{34n}
	\|\Lambda^k\bv-\Lambda^k\bw\|_{C([0,T];V)}\le
	L_k\|\bv-\bw\|_{C([0,T];V)}\quad\forall\, \bv,\, \bw\in C([0,T];V).
\end{equation}
Next, it is easy to see that
\[\lim_{k\to\infty}\,\frac{L^kT^{k}}{k!}=0\]
and, therefore,  inequality \eqref{34n}  guarantees  that there exists $p\in\mathbb{N}$ such that  $\Lambda^p$ is a contraction on the space  $C([0,T];V)$ i.e.,
\begin{equation}\label{34}
	L_{p}<1.
\end{equation}
We now use equality $\Lambda^p\bu =\bu$ to see that
\begin{eqnarray*}
	&&\|\bu_n-\bu\|_{C([0,T];V)} \\ [2mm]
	&&\ \ \le \|\bu_n-\Lambda \bu_n\|_{C([0,T];V)}+\|\Lambda \bu_n-\Lambda^2 \bu_n\|_{C([0,T];V)}\\ [2mm]
	&&\ \ \ +\ldots+\|\Lambda^{p-1} \bu_n-\Lambda^{p}\bu_n\|_{C([0,T];V)}+\|\Lambda^p \bu_n-\Lambda^p \bu\|_{C([0,T];V)}.
\end{eqnarray*}
Next, using \eqref{34n} we
find that
\begin{eqnarray*}
	&&\|\bu_n-\bu\|_{C([0,T];V)}\\ [2mm]
	&&\quad\le \big(1+L_1+L_2+\ldots+L_{p-1}\big)
	\|\bu_n-\Lambda\bu_n\|_{C([0,T];V)}+ L_{p}\|\bu_n-\bu\|_{C([0,T];V)},
\end{eqnarray*}
which implies that
\begin{equation*}
	(1-L_{p})\|\bu_n-\bu\|_{C([0,T];V)}\le \big(1+L_1+L_2+\ldots+L_{p-1}\big)
	\|\bu_n-\Lambda\bu_n\|_{C([0,T];V)}.
\end{equation*}
Then,  inequality \eqref{34} yields
\begin{equation*}
	\|\bu_n-\bu\|_{C([0,T];V)}\le \frac{1}{1-L_{p}}\big(1+L_1+L_2+\ldots+L_{p-1}\big)
	\|\bu_n-\Lambda\bu_n\|_{C([0,T];V)}.
\end{equation*}
Finally, we use the convergence  \eqref{71} to see  that
$\bu_n\to \bu$ in $C([0,T];V)$. This shows that Problems $\cP$ and $\cQ$ are $q$-well-posed and concludes the proof of the theorem.
\end{proof}

We now proceed with some additional  results on the well-posedness of Problems $\cP$ and $\cQ$. To this end, we introduce the following notation:
\begin{eqnarray*}
	&&\hspace{-8mm}\cS=\Big\{\,   \{\bu_n\}\subset C([0,T];V)\ : \ \bu_n\to\bu\ \ {\rm in}\ \ C([0,T];V)\,\Big\},\\ [2mm]
	&&\hspace{-8mm}\cS_p=\Big\{\,   \{\bu_n\}\subset C([0,T];V)\ :   \{\bu_n\}\ \mbox{is a $p$-approximating sequence}\,\Big\}, \\ [2mm]
	&&\hspace{-8mm}\cS_q=\Big\{\,  \{\bu_n\}\subset C([0,T];V)\ :   \{\bu_n\}\ \mbox{is a $q$-approximating sequence}\,\Big\}.
\end{eqnarray*}	Then, Theorem \ref{t4} states that
$\cS_p\subset \cS$ and $\cS_q\subset \cS$. Our aim in what follows is to compare the concepts of $p$-well-posedness and $q$-well posedness introduced above. To this end, we shall use  Remark \ref{r6} and Example \ref{ex6} below to see that

\begin{equation}\label{61}
\cS_q=\cS, \qquad \cS_p\subset \cS_q,\qquad \cS_p\ne\cS_q.
\end{equation}

\begin{Remark}\label{r6} Assume that $\{\bu_n\}\in\cS$.
Then,  $\bu_n\to\bu$ in $C([0,T];V)$ and, since the operator $\Lambda$ defined by \eqref{la} is continuous we deduce that
$\bu_n-\Lambda\bu_n\to \bu-\Lambda\bu$ in $C([0,T];V)$. On the other hand, \eqref{ffp} guarantees that $\Lambda\bu=\bu$, which implies that $\bu_n-\Lambda\bu_n\to \bzero$ in $C([0,T];V)$. This shows that
$\{\bu_n\}\in\cS_q$. We conclude from here that $\cS\subset \cS_q$ and, since the converse inclusion was proved in Theorem $\ref{t4}$, we deduce that $\cS_q=\cS$. Moreover, Theorem $\ref{t4}$ shows that $\cS_p\subset\cS$ and, therefore, $\cS_p\subset\cS_q$.

\end{Remark}

\begin{Example}\label{ex6}
Consider the Problems $\cP$ and $\cQ$ in the particular setting of Example $\ref{ex0}$. Recall that the common solution of these problems is the function $u$ given by \eqref{sol}. Let $\{u_n\}$ be the sequence defined by
\begin{equation*}
	u_n(x,t)=xe^{-t}+\frac{1}{n}\qquad\forall\,(x,t)\in [0,1]\times[0,T], \ n\in\mathbb{N}.
\end{equation*}
Then, it is easy to see that $u_n\to u$ uniformly and, therefore, $\{u_n\}\in\cS=\cS_q$. Nevertheless, since $u_n(1,0)=1+\frac{1}{n}>1$, we deduce that condition $u_n(t)\in U$ for any $t\in[0,T]$ and $n\in\mathbb{N}$ is not satisfied.	 Therefore, $\{u_n\}\notin \cS_p$. This shows that there exist $q$-approximating sequences which are not $p$-approximating  sequences and, therefore, $ \cS_p\ne\cS_q$.
	
\end{Example}

	We end this section with the following comments on the equality $\cS_q=\cS$ in \eqref{61}.

\begin{Remark} We claim that among all the concepts which make Problems $\cP$  and $\cQ$ well-posed, the $q$-well-posedness
	concept in Definition $\ref{dQ}$ is optimal, in the sense that it uses the largest set of approximating sequences. Indeed, consider a different well-posedness concept, say the $r$-well-posedness concept, defined by a set of $r$-approximating sequences, denoted by
	$\cS_r$. Then, if Problems $\cP$ and $\cQ$ are $r$-well-posed we have 	$\cS_r\subset\cS $, by definition. Now,  since $\cS_q=\cS$, we deduce that $\cS_r\subset\cS_q$,  which justifies our claim. In particular, since the inclusion $\cS_p\subset \cS_q$ is strict, it follows from above that the $q$-well-posedness result in Theorem $\ref{t4}$ is stronger than the $p$-well-posedness result in the same theorem. As a consequence, the $q$-well-posedness concept in Definition $\ref{dQ}$ is better than the $p$-well-posedness concept in Definition $\ref{dP}$.
\end{Remark}

\begin{Remark}	Equality $\cS_q=\cS$  shows that a sequence $\{\bu_n\} \subset C([0,T];V)$ converges uniformly to the solution $\bu$
of Problems $\cP$ and $\cQ$ if and only if it is a $q$-approximating sequence, that is, if it satisfies the condition in Definition $\ref{dQ}$\,{\rm (a)}. It turns out from here that this  condition represents a {\rm criterion of convergence} to the common solution of Problems $\cP$ and $\cQ$.
\end{Remark}

\noindent

\section*{Aknowledgment}
{The work of Livia Betz was supported by the DFG grant BE 7178/3-1 for the project ``Optimal Control of Viscous
Fatigue Damage Models for Brittle Materials: Optimality Systems".
The work of Andaluzia Matei and Mircea Sofonea was supported by the European program {\it ACROSS} including the {\it University of Craiova}, Romania, and the {\it University of Perpignan Via Domitia}, France.}

\end{document}